\newtheorem{theorem}{Theorem}
\newtheorem{definition}[theorem]{Definition}
\newtheorem{lemma}[theorem]{Lemma}
\newtheorem{proposition}[theorem]{Proposition}
\newtheorem{remark}[theorem]{Remark}
\newenvironment{proof}[1][Proof]{\noindent\textbf{#1.} }{\ \rule{0.5em}{0.5em}}
\def\D{{\cal D}}
\def\eps{\varepsilon}
\def\B{{\cal B}}
\def\A{{\cal A}}
\def\C{{\cal C}}
\def\R{{\mathbb R}}
\def\E{{{\rm I} \kern -.15em {\rm E}    }}
\newcommand{\reals}{{{\rm I} \kern -.15em {\rm R} }}
\newcommand{\nat}{{{\rm I} \kern -.15em {\rm N} }}
\definecolor{red}{rgb}{1.0,0.0,0.0}
\definecolor{blu}{rgb}{0.0,0.0,1.0}
\definecolor{gre}{rgb}{0.03,0.50,0.03}
\begin{document}

\begin{titlepage}
\vspace{-2truecm}
\begin{center}
    \large
   \bfseries
   On the equivalence of internal and external habit formation models with finite memory \footnote{We thank Herakles Polemarchakis for several useful discussions on this topic.} \\[2\baselineskip]
    \normalfont
   Emmanuelle Augeraud-Veron \\
	{\small University of La Rochelle}\\ [1\baselineskip]
   Mauro Bambi \\
    {\small University of York} \\ [1\baselineskip]
    Fausto Gozzi \\
    {\small LUISS and University of Pisa}
 \\[2\baselineskip]
\vspace{-0.5cm}
 {26 March 2014} \\[4\baselineskip]
 \bfseries
\vspace{-1.5cm}

	Abstract \\ [2\baselineskip]
\vspace{-1.25cm}
\end{center}
	
\normalfont

\noindent In this paper we use a dynamic programming approach to analytically solve an endogenous growth model with internal habits where the key parameters describing their formation, namely the intensity, persistence and lag structure (or memory), are kept generic. Then we show that external and internal habits lead to the same closed loop policy function and then to the same (Pareto) optimal equilibrium path of the aggregate variables when the utility function is subtractive nonseparable. The paper uses new theoretical results from those previously developed by the dynamic programming literature applied to optimal control problems with delay and it extends the existing results on the equivalence between models with internal and external habits to the case of finite memory.

\vspace{.1cm}
\noindent {\bf JEL Classification} C6; E1; E2. \vspace{.1cm}

\noindent {\bf Keywords} Habit Formation; Optimal Control with Delay; Dynamic Programming.

  \end{titlepage}

\setcounter {footnote} {0}

\newpage

\tableofcontents

\section{Introduction}

\textit{Motivation and Results --}
In this paper we consider the simplest endogenous growth model with linear technology, as in Rebelo \cite{rebe}, and we assume that the representative household's utility function depends also on internal habits, whose formation is based on the history (up to a given fixed lag $\tau$) of past consumption. The resulting instantaneous utility is
\[
u(c(t),h(t))
\]
while the habit formation is described by the following exponentially
smoothed index of the past consumption rates
\begin{equation}
h(t)=\varepsilon \int_{t-\tau }^{t} c(u)e^{\eta (u-t)}du \qquad \forall
t\geq 0  \label{HABITEX}
\end{equation}
with $\varepsilon\geq0$, $\eta\geq0$, and $\tau\geq0$ indicating
respectively the intensity, persistence and lags structure (or memory) of the habits. The habit formation equation (\ref{HABITEX}) is general in its assumptions on the intensity, persistence and lag structure  and it embeds all the main specifications used in the literature; the role of $\tau$ is indeed critical in pinning down the different forms of the habits (e.g. $\tau=1$ is the continuous time version of the case studied by Boldrin et al. \cite{Bol-Chr-Fis} among others).\footnote{Equation (\ref{HABITEX}) does not include the case with deep habits studied by Raven et al. \cite{Ravn} since we focus on a single consumption good economy.} A generic and finite choice of the memory parameter $\tau$ is also consistent with recent empirical evidences.\footnote{Among them, Crawford \cite{Craw} uses a
revealed preference approach to characterize the internal habits. He finds no sharp
result on the lag structure: increasing the number of period lags in the consumption
of the good increases the ``agreement between theory and data. However it (...) has a large
negative effect on the power of the test compared with the one-lag version''. In his contribution, he looks at the cases $\tau=1,2$ and $3$.}

Our objective is to solve analytically this problem with internal habits using a dynamic programming approach and to prove that, independently on the choice of the habits formation's parameters, the solution of this problem coincides with the solution of the same problem but with external habits
when the instantaneous utility function has the nonseparable
subtractive form:\footnote{This instantaneous utility function is, together with the multiplicative nonseparable,  one of the two most common specifications used in the habit formation literature.}
\begin{equation}
u(c(t),h(t))=\frac{(c(t)-h(t))^{1-\gamma }}{1-\gamma } \qquad \gamma>0, \ \gamma \neq 1. \label{subU}
\end{equation}

To arrive to this result we prove that,
keeping all the else equal, the problem with internal habits leads to the
same solution path of its counterpart with external habits; in the latter
the instantaneous utility function has the same functional form but the habits are now formed over the past average economy-wide consumption, $\bar c(\cdot)$:

\[
h(t)=\varepsilon \int_{t-\tau }^{t} \bar c(u)e^{\eta (u-t)}du \qquad \forall
t\geq 0;
\]

Our contribution is relevant for the following two main reasons. Firstly, it provides a full analytical characterization of an endogenous growth model with internal habits and finite memory. To achieve this result we have  extended the dynamic programming approach to optimal control of Delay Differential Equations (DDE)
first developed in Fabbri and Gozzi \cite{FaGo} to a different framework. In fact, the presence of the habit formation with a potentially finite lag parameter, $\tau$, implies a substantial analytical deviation from other problems studied in the literature since here the delay is contained in the objective function. Therefore, an extension of the previous results on dynamic programming approach to optimal control of DDE's is necessary to find explicitly the policy function of our problem: in this extent, our paper represents a new contribution to the dynamic programming literature in infinite dimension, as it will be extensively explained later in this introduction and in Section 3. Secondly, it extends the existing results on consumption externalities not leading to economic distortions. More precisely, we prove that the equivalence holds when the three key parameters $\varepsilon$, $\eta$, and $\tau$ in the habit formation equation are kept generic, while in previous contributions were assumed either $\varepsilon=\eta$ and $\tau=1$ (e.g. Alonso-Carrera et al. \cite{AlCab}) or $\varepsilon=\eta$ and $\tau=\infty$ (e.g. Gomez \cite{Gom}).


\bigskip

\bigskip

\textit{Methodology --}
To prove our results we use a dynamic programming approach to find the solution of the model with internal habits, then we compare it with the solution in the case with external habits and finally we show that the two resulting closed loop policy functions are identical when the utility function has the nonseparable
subtractive form.

The model with external habits was solved in Augeraud-Veron and Bambi
\cite{auba} using a modified version of the Pontryagin Maximum Principle (PMP) (see e.g. \cite{OksendalPMP}); the closed
loop policy function was also found from the explicit computation of capital,
consumption and habits.
Such PMP approach has been recently used by several
authors to solve vintage capital models
(e.g. Barucci and Gozzi \cite{BG1}, \cite{BG2},  Boucekkine et al. \cite{Bouc3}, \cite{Bouc4}, \cite{Bouc5},
Br\'echet et al. \cite{BTV10}
Feichtinger et al \cite{FJET}, \cite{FJMAA}, Saglam and Veliov \cite{SaglamVeliov08}
Veliov \cite{Veliov08}) and time to build models
(e.g. Bambi \cite{bambi} and Bambi and Gori \cite{BaGo}).

The same strategy can be applied to the case with
internal habits
but it won't lead to an explicit formulation of the optimal policy
because of the mixed type equation resulting from the PMP in presence of retarded control.
So we proceed to solve the problem with internal habits and we
find
the closed loop policy function through the dynamic programming method;
this approach successfully leads to identify the explicit form of the closed loop policy function as soon as its associated
Hamilton-Jacobi-Bellman equation (HJB) can be solved explicitly.
It must be noted that the delayed structure of the problem pins down an HJB equation which is a partial differential equation in infinite dimension without explicit solutions unless specific assumptions on the production and utility function are
introduced. Luckily enough, the linear production function and the
nonseparable subtractive form of the utility function let us develop an ad
hoc approach in order to calculate explicitly the solutions of the HJB
equation
and then the closed loop policy functions which, as explained
before, are crucial to prove the equivalence between the internal and
external habit formation model.
\\
The dynamic programming approach to
optimal control problems with delay has found very few applications
in the economic literature.
As far as we know the first to apply this method were Fabbri and
Gozzi \cite{FaGo} in a vintage capital framework, and later Boucekkine et
al. \cite{Bouc1} and Bambi et al. \cite{bambi2010}, the latter in a
time-to-build model (see also \cite{BaGo}, \cite{FagGozMPS} \cite{FagGozJME} and \cite{FagSICON}
for application of the same technique to models with age structure).
More recently Boucekkine et al. \cite{Bouc2} used it to
investigate the compatibility of the optimal population size concepts
produced by different social welfare functions and egalitarianism.
We must note that, from a technical point of view, the problem we face
in this paper is quite different from those in the papers just quoted
because the delayed control appears both in the objective functional
and in the constraints: then we have to extend the theoretical results, already used in the previously cited papers,
to a different context, see on this Remark \ref{rm:technical}.

\bigskip

\textit{Plan of the paper --}
The paper is organized as follows. Section 2 presents the general model with habit formation
where subsection 2.1 is devoted to further explain the case with internal habits.
Section 3 explains how the problem can be rewritten in infinite dimension and how to arrive to the solution path using the Hamilton-Jacobi-Bellman equation.
Section 4 states and proves the equivalence result. Finally Section 5 concludes the paper.

\section{The model\label{sec:thedetcase}}


Consider a standard neoclassical growth model, where the economy consists of
a continuum of identical infinitely lived atomistic households, and firms.
The households' objective is to maximize over time the discounted
instantaneous utility (here $c(t)$ and $h(t)$ are, respectively, the consumption
and the habit at time $t$):
\begin{equation}
u(c(t),h(t))=\frac{(c(t)-h(t))^{1-\gamma }}{1-\gamma },  \label{UU}
\end{equation}
for $c(t)\geq h(t)$ and $\gamma >0$ $\gamma \ne 1$\footnote{The case $\gamma=1$ can be treated exactly as the other ones. We do not do it here to make the analytical part less cumbersome.}. If $c(t)<h(t)$ the utility function is
not always well defined in the real field and it is never concave. For this
reason, it is generally assumed that $u(c(t),h(t))=-\infty $ as soon as
$c(t)<h(t)$. The instantaneous utility function (\ref{UU}) clearly implies
addiction in the habits since current consumption is forced to remain higher
than the habits over time. The habits are formed according to the rule
\begin{equation}
h(t)=\varepsilon \int_{t-\tau }^{t}\hat c(u)e^{\eta (u-t)}du
\qquad \forall t\geq 0  \label{HABIT}
\end{equation}
where $\hat c(t)$ indicates the customary consumption level, which is equal
to $c(t)$ in the case of internal habits or to the economy-wide average
consumption, $\bar c(t)$, when we consider external habits. Moreover $\eta >0$ measures the persistence of habits, while $\varepsilon > 0$ the
intensity of habits, i.e. the importance of the economy average consumption
relative to current consumption. Finally the habits' past history, $h(t)$
with $t\in[-\tau,0)$ is given; also, in the case of external habits,
the path of $\bar c(t)$ is taken as given
since no individual decision have an appreciable effect on the average
consumption of the economy.

Differentiating (\ref{HABIT}) (this is possible e.g. in all continuity points of $\hat c(\cdot)$)
we have
\begin{equation}
\dot{h}(t)=\varepsilon \left( \hat c(t)- \hat c(t-\tau )e^{-\eta \tau
}\right) -\eta h(t)\qquad \qquad \forall t\geq 0  \label{HABITdiff}
\end{equation}


Assuming a linear technology $y(t)=Ak(t)$ and a depreciation factor $\delta>0$
the optimal control problem to be solved is
\begin{eqnarray*}
&&\displaystyle\max \int_{0}^{\infty }
\frac{\left(c(t)-\varepsilon \int_{t-\tau }^{t}\hat{c}
(u)e^{\eta (u-t)}du \right)^{1-\gamma }}{1-\gamma }\,e^{-\rho t}dt \\
&&s.t.\ \ \dot{k}(t)=\left( A-\delta \right) k(t)-c(t) \\
&&\qquad k(t)\geq 0,\ c(t)\geq 0, \  c(t) \ge\varepsilon \int_{t-\tau }^{t}\hat{c}%
(u)e^{\eta (u-t)}du\\
&&\qquad k(0)=k_{0}>0, \qquad \hat c(u) \;\; given\; for \; u\in [-\tau,0).
\end{eqnarray*}
Observe that our framework describes implicitly an economy where new capital is financed by a riskless technology whose instantaneous rate of return is $A-\delta$. It is also worth noting that in the case $\hat{c}(t)=\bar{c}(t)$ the habits
enters as an externality in the utility function. On the other hand, when
$\hat{c}(t)=c(t)$ the habits are formed over the representative agent's past
consumption level and then are internalized in her resource allocation
decision. Observe also that in the last case the social welfare theorems
guarantee that the solution is Pareto optimal while in the former the
presence of the externality could lead to an inefficient path. In the rest
of the paper we will study the problem with internal habits ($\hat{c}(t)=c(t)$)
and then we will compare our findings with those (presented in \cite{auba})
of the model with external habits ($\hat{c}(t)=\bar{c}(t)$).

\subsection{The internal habit problem}

The social planner problem consists in finding the strategy
$c\left( \cdot\right) $ which maximizes the objective functional
\begin{equation}\label{eq:objective}
\int_{0}^{\infty }\frac{\left(c(t)-\varepsilon \int_{t-\tau }^{t}c(u)e^{\eta
(u-t)}du \right)^{1-\gamma }}{1-\gamma }\;e^{-\rho t}dt
\end{equation}
under the state equation (which can be seen as an equality constraint)
\begin{equation}
\dot{k}(t)=\left( A-\delta \right) k(t)-c(t), \quad t \ge 0,  \label{eq:stateeq}
\end{equation}
the positivity constraints
\begin{equation}\label{eq:constraints1}
k(t)\geq 0,\qquad c(t)\geq 0,
\end{equation}
the constraint
\begin{equation}\label{eq:constraints2}
c(t) \ge \varepsilon
\int_{t-\tau }^{t}c(u)e^{\eta (u-t)}du,
\end{equation}
and with initial data $k(0)=k_{0}>0$, $c\left( s\right) =c_{0}\left( s\right) $
given for $s\in \lbrack -\tau ,0)$, where we assume
$c_{0}\left( \cdot\right)
\in L^{1}\left( [ -\tau ,0);\mathbb{R}^{+}\right)$.\footnote{$L^{1}\left( [ -\tau ,0);\mathbb{R}^{+}\right)$ indicates the space of functions from $[ -\tau ,0)$ to $\mathbb{R}^{+}$ which are Lebesgue measurable and integrable.} Observe that we have already substituted in (\ref{eq:objective}) and (\ref{eq:constraints2}) the equation describing the internal habit formations:
\begin{equation}
h(t)=\varepsilon \int_{t-\tau }^{t} c(u)e^{\eta (u-t)}du
\qquad \forall t\geq 0.  \label{HABITnew}
\end{equation}
whose initial value is known and is given by
\begin{equation}
h_{0}:=\varepsilon \int_{-\tau }^{0}c_0(u)e^{\eta (u-t)}du.
\label{eq:initialdatah}
\end{equation}


In the following two sections we solve the social planner
problem by using the dynamic programming approach.
Then, in Section \ref{sec:equivalence}, we prove
our equivalence theorem showing that the
closed loop policy formula found in Section \ref{sec:HJB}
is the same found in \cite{auba} for the market equilibrium problem.

\section{Solution of the internal habit problem\label{sec:solution}}

\subsection{Preliminary results\label{sec:preliminary}}


We first introduce a notation useful to rewrite more formally equation
(\ref{HABITnew}) and the objective functional. We call $c_{0}(\cdot):\left[
-\tau ,0\right) \rightarrow
\mathbb{R}_{+}$ the initial datum, $c(\cdot):[0,\infty )\rightarrow
\mathbb{R}_{+}$ the control strategy and
$\widetilde{c}:[-\tau ,\infty )\rightarrow
\mathbb{R}_{+}$ the function (sometimes called the {\em concatenation}
of the two above)
\[
\widetilde{c}\left( s\right) =\left\{
\begin{array}{c}
c_{0}\left( s\right) \text{ for }s\in \lbrack -\tau ,0) \\
c\left( s\right) \text{ for }s\in \lbrack 0,\infty )
\end{array}
\right.
\]
With this notation equation (\ref{HABITnew}) is rewritten more precisely
as
\begin{equation}
h(t)=\varepsilon \int_{t-\tau }^{t} \widetilde{c}(u)e^{\eta (u-t)}du
\qquad \forall t\geq 0.  \label{HABITtilde}
\end{equation}
The state equation (\ref{eq:stateeq}) is a standard linear
Ordinary Differential Equation (ODE) and so
it can be easily seen that, for every locally integrable control strategy
$c\left( \cdot\right) :\mathbb{R}_{+}\rightarrow\mathbb{R}_+$\footnote{The space of such functions will be denoted from now on by $L^1_{loc}([0,+\infty);\R_+)$.},
there exists a unique absolutely continuous solution
of it, which will be denoted as $k_{k_{0},c(\cdot)}\left( \cdot\right) $
and which is given by
\begin{equation}
\label{eq:DDE}
k\left( t\right) =k_{0}e^{\left( A-\delta \right) t}-\int_{0}^{t}e^{\left(
A-\delta \right) \left( t-u\right) }{c}(u)du.
\end{equation}
The objective functional to maximize is
$$J\left(  k_{0},c_{0}(\cdot) ;c(\cdot)\right)
:=\int_{0}^{\infty }\frac{\left(c(t)-\varepsilon \int_{-\tau }^{0}\widetilde{c}
(u+t)e^{\eta u}du \right)^{1-\gamma }}{1-\gamma }\;e^{-\rho t}dt,$$
over the set
\[
\mathcal{C}\left( k_{0},c_{0}(\cdot)\right) =\left\{ c\left( \cdot\right) \in
L_{loc}^{1}\left(  [0,+\infty); \R^+ \right):\; k_{k_{0},c(\cdot)}\left( \cdot\right)\ge 0
\right.
\]
\[
\left. \text{ and} \; c(t)\geq \varepsilon
\int_{t-\tau }^{t}\widetilde{c}(u)e^{\eta (u-t)}du\geq 0\text{ for almost every }t\in
\mathbb{R}_{+}\right\}
\]
We call from now on (\textbf{P}) the problem of finding an optimal control strategy
i.e. a strategy
$c^{\ast }(\cdot)\in \mathcal{C}\left( k_{0},c_{0}(\cdot)\right)$
such that
\[
-\infty <J\left(  k_{0},c_{0}(\cdot);c^{\ast }(\cdot)\right)<+\infty
\] and
\[
J\left(  k_{0},c_{0}(\cdot);c^{\ast }(\cdot)\right) =\sup_{c(\cdot)
\in \mathcal{C}\left( k_{0},c_{0}(\cdot)\right)}
\int_{0}^{\infty }\frac{\left(c(t)-\varepsilon
\int_{-\tau }^{0}\widetilde{c}(u+t)e^{\eta u}du\right)^{1-\gamma }}{1-\gamma }
\;e^{-\rho t}dt
\]
As usual we call value function the map
$$V\left( k_{0},c_{0}(\cdot)\right):=
\sup_{c(\cdot)
\in \mathcal{C}\left( k_{0},c_{0}(\cdot)\right)}
\int_{0}^{\infty }\frac{\left(c(t)-\varepsilon
\int_{-\tau }^{0}\widetilde{c}(u+t)e^{\eta u}du\right)^{1-\gamma }}{1-\gamma }
\;e^{-\rho t}dt
$$

We now give a preliminary study of the problem concerning the behavior of admissible trajectories and the finiteness of the value function.


First of all we give an estimate for the admissible control strategies and state trajectories.

\begin{proposition}[Lower bound for admissible strategies]
\label{pr:estimateck}
We consider any initial datum
$\left( k_{0},c_{0}\left( \cdot\right) \right)
\in \left(\mathbb{R}_{+}\times L^{1}\left( [ -\tau ,0) ,
\mathbb{R}_{+}\right) \right) $ and any control strategy
$c\left( \cdot\right) \ge 0 $ satisfying (\ref{eq:constraints2}).
Then we have, for every $t \ge 0$,
\begin{equation}\label{eq:controlnew1}
c(t) \ge c^m(t)
\end{equation}
where
$c^{m}\left( \cdot\right)\in L^1_{loc}([0,+\infty); \R^+ )$
is the unique solution of the equation
\begin{equation}\label{eq:controlnew}
c^{m}\left( t\right)
 =\varepsilon \int_{t-\tau}^{t}\widetilde{c}^{\, m}\left( u\right) e^{\eta \left( u-t\right) }du
\end{equation}
Moreover the state trajectory $k\left( \cdot\right) $ associated to $c(\cdot)$
is dominated at any time $t\geq 0$ by the solution $k^{M}\left( \cdot\right) $
obtained taking the same initial datum $k_{0}$ and the control $c^{m}\left( \cdot\right)$
\begin{equation}\label{eq:statenew}
k(t)\le k^{M}\left( t\right) =e^{\left( A-\delta \right) t}\left[ k_{0}-
\int_{0}^{t}c^{m}\left( u\right) e^{-\left( A-\delta \right) u}du\right].
\end{equation}
\end{proposition}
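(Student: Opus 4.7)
My plan is to first construct $c^m$, then to obtain the pointwise bound $c(t)\ge c^m(t)$ by a Gronwall-type comparison, and finally to deduce the state estimate directly from the explicit formula (\ref{eq:DDE}).

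\emph{Construction of $c^m$.} On each interval $[n\tau,(n+1)\tau]$, starting from $n=0$, equation (\ref{eq:controlnew}) can be rewritten, by splitting the integral at $n\tau$, as a Volterra equation of the second kind
\[
c^{m}(t) \;=\; f_n(t)+\varepsilon\int_{n\tau}^t c^{m}(u)e^{\eta(u-t)}du,\qquad t\in[n\tau,(n+1)\tau],
\]
where $f_n\ge 0$ collects the already-known contribution of $c_0$ (through the previous iterations) and of $c^m$ on $[0,n\tau]$. Since the kernel $\varepsilon e^{\eta(u-t)}$ is uniformly bounded on $\{n\tau\le u\le t\le(n+1)\tau\}$, the Banach fixed-point theorem in $C([n\tau,(n+1)\tau])$ gives a unique continuous solution, and the monotonicity of the Picard iteration started from $f_n\ge 0$ forces $c^m\ge 0$. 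Gluing the pieces delivers a unique $c^{m}\in L^1_{loc}([0,+\infty);\R^+)$ satisfying (\ref{eq:controlnew}).

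\emph{Comparison $c\ge c^m$.} Set $\phi(t):=c(t)-c^m(t)$. Since $\widetilde c$ and $\widetilde{c}^{\,m}$ coincide with $c_0$ on $[-\tau,0)$, subtracting the equality (\ref{eq:controlnew}) from the inequality (\ref{eq:constraints2}) satisfied by $c$ gives, for a.e.\ $t\ge 0$,
\[
\phi(t)\;\ge\;\varepsilon\int_{\max(t-\tau,\,0)}^{t}\phi(u)\,e^{\eta(u-t)}\,du.
\]
Write $\phi^-:=\max(-\phi,0)$. Using $-\phi\le\phi^-$ together with $e^{\eta(u-t)}\le 1$ for $u\le t$ (which holds because $\eta\ge 0$), we obtain the one-sided estimate $\phi^-(t)\le \varepsilon\int_0^t \phi^-(u)\,du$ a.e.\ on $[0,+\infty)$. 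Since $c,c^m\in L^1_{loc}$, the function $\Phi(t):=\int_0^t\phi^-(u)\,du$ is absolutely continuous with $\Phi(0)=0$ and satisfies $\Phi'(t)\le\varepsilon\Phi(t)$ a.e.; the usual Gronwall argument forces $\Phi\equiv 0$, hence $\phi^-\equiv 0$, which is (\ref{eq:controlnew1}).

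\emph{State estimate.} Subtracting the representation (\ref{eq:DDE}) for the trajectory driven by $c$ from the same formula applied with control $c^m$ (and the common initial datum $k_0$) and then invoking (\ref{eq:controlnew1}) yields
\[
k^{M}(t)-k(t)\;=\;\int_0^t e^{(A-\delta)(t-u)}\bigl[c(u)-c^m(u)\bigr]du\;\ge\;0,
\]
which gives (\ref{eq:statenew}). The only delicate part of the argument is the Gronwall step in the comparison: one has to pass from the two-sided integral inequality for $\phi$ (whose sign is a priori undetermined) to a one-sided estimate on $\phi^-$ alone. Once this reduction is done and the kernel is observed to be uniformly bounded by $1$, the conclusion comes from the standard linear Gronwall lemma; the construction of $c^m$ and the state comparison are essentially bookkeeping around the explicit formulas.
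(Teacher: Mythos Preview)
Your proof is correct and follows the same strategy as the paper: subtract the defining equation for $c^m$ from the constraint on $c$, apply a Gronwall-type argument to the difference, and then read off the state estimate from the explicit variation-of-constants formula (\ref{eq:DDE}). The only organizational difference is that you handle the comparison globally by passing to the negative part $\phi^-$ and bounding the kernel $e^{\eta(u-t)}\le 1$, whereas the paper argues inductively on successive intervals $[n\tau,(n+1)\tau]$; your packaging is slightly cleaner, but the substance is the same.
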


\begin{proof}
First we observe that, thanks to standard existence theorems for DDE's (see e.g. \cite{HaleIntro}, Section 2.2) the equation (\ref{eq:controlnew}) has a unique solution for every $c_0(\cdot)\in L^1([-\tau,0);\R^+)$.

Now take a control strategy $c(\cdot) \in\mathcal{C}\left( k_{0},c_{0}(\cdot)\right)$. The constraint (\ref{eq:constraints2}) together with (\ref{eq:controlnew}) implies that
$$
c(t)- c^{m}\left( t\right)
\ge\varepsilon \int_{t-\tau}^{t}
 \left[\widetilde{c}\left( u\right)-\widetilde{c}^{m}\left( u\right)\right] e^{\eta \left( u-t\right) }du, \qquad t \ge 0
$$
Clearly, since both functions $c(\cdot)$ and $c^m(\cdot)$ have the same past $c_0(\cdot)$, it must be $\widetilde{c}\left( t\right)-\widetilde{c}^{m}\left( t\right)= 0$ for $t\in [-\tau,0)$.
So, calling $c_1(t):= c(t)- c^{m}\left( t\right)$ we get, for $t \in [0,\tau]$,
$$
c_1(t)\ge \int_0^t c_1(u)e^{\eta \left( u-t\right) }du.
$$
This implies, by a simple application of Gronwall inequality (see e.g. \cite{HaleIntro}[p.15, Lemma 3.1], that $c_1(t) \ge 0$ for $t\in [0,\tau]$.
Take now $t \in (\tau,2\tau]$. As above we have, for any such $t$,
$$
c_1(t)\ge \int_{t-\tau}^{\tau} c_1(u)e^{\eta \left( u-t\right) }du
+\int_\tau^t c_1(u)e^{\eta \left( u-t\right) }du.
$$
Since the function $t \to  \int_{t-\tau}^{\tau} c_1(u)e^{\eta \left( u-t\right) }du$
is nonnegative for every $t \in (\tau,2\tau]$, then applying again the Gronwall inequality we get that $c_1(t) \ge 0$ for $t \in (\tau,2\tau]$.
The claim (\ref{eq:controlnew1}) for every $t\ge 0$ then easily follows by induction.
Finally the claim (\ref{eq:statenew}) follows by (\ref{eq:controlnew1}) and
by the formula (\ref{eq:DDE}).
\end{proof}

\bigskip


The characteristic equation associated to the delay equation (\ref{eq:controlnew}) writes
\begin{equation}\label{eq:carnew}
1=\varepsilon \int_{-\tau }^{0}e^{\left( \lambda +\eta \right) u}du
\end{equation}

\begin{proposition} [Properties of characteristic roots]
\label{pr:careq}
The characteristic equation (\ref{eq:carnew}) admits a unique real root,
$\lambda_0$. We have $\lambda_0<\varepsilon - \eta$
and all complex roots have a real part smaller than $\lambda_0.$
Moreover,
\begin{itemize}
  \item if $1-\varepsilon \int_{-\tau }^{0}e^{\eta u}du<0$ then $\lambda_0$
is the only root with positive real part;
    \item if $1-\varepsilon \int_{-\tau}^{0}e^{\eta u}du>0$ all the roots have negative real part.
    \item if $1-\varepsilon \int_{-\tau}^{0}e^{\eta u}du=0$ then $\lambda_0=0$
     and the other roots have negative real part.
\end{itemize}
\end{proposition}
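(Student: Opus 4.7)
My plan is to study the real-valued auxiliary function
$g(\lambda):=\varepsilon\int_{-\tau}^{0}e^{(\lambda+\eta)u}\,du$
on the real line first, and then bound complex roots through a triangle-inequality argument. Differentiating under the integral sign yields $g'(\lambda)=\varepsilon\int_{-\tau}^{0}u\,e^{(\lambda+\eta)u}\,du<0$, since the integrand is pointwise negative; combined with the limits $g(\lambda)\to+\infty$ as $\lambda\to-\infty$ and $g(\lambda)\to 0^{+}$ as $\lambda\to+\infty$, strict monotonicity yields existence and uniqueness of a real root $\lambda_0$. The bound $\lambda_0<\varepsilon-\eta$ follows from the explicit computation $g(\varepsilon-\eta)=\varepsilon\int_{-\tau}^{0}e^{\varepsilon u}\,du=1-e^{-\varepsilon\tau}<1=g(\lambda_0)$ and monotonicity of $g$.

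Next, for a complex root $\lambda=\alpha+i\beta$ with $\beta\neq 0$, the triangle inequality gives
$$1 = \bigg|\varepsilon\int_{-\tau}^{0}e^{(\alpha+i\beta+\eta)u}\,du\bigg|\le \varepsilon\int_{-\tau}^{0}e^{(\alpha+\eta)u}\,du = g(\alpha),$$
and the inequality is \emph{strict} because $u\mapsto e^{i\beta u}$ is not of constant argument on $[-\tau,0]$ when $\beta\neq 0$. Hence $g(\alpha)>1=g(\lambda_0)$, and strict monotonicity of $g$ forces $\alpha<\lambda_0$. This already shows that $\lambda_0$ is the dominant root.

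The three sub-cases are distinguished by the sign of $1-g(0)$. Since $g$ is strictly decreasing and $g(\lambda_0)=1$, one has $\lambda_0<0$, $\lambda_0=0$, or $\lambda_0>0$ respectively when $g(0)<1$, $g(0)=1$, or $g(0)>1$. In the second and third cases the bound $\alpha<\lambda_0\le 0$ on complex roots directly yields the claimed conclusions.

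The main obstacle is the first case $g(0)>1$: here $\lambda_0>0$ and one must rule out any complex root with $\mathrm{Re}(\lambda)>0$. My approach is to first exclude purely imaginary roots. Rewriting the characteristic equation in the equivalent form $\lambda+\eta=\varepsilon(1-e^{-(\lambda+\eta)\tau})$ and substituting $\lambda=i\beta$ with $\beta\neq 0$ yields the system
$$\eta=\varepsilon\bigl(1-e^{-\eta\tau}\cos(\beta\tau)\bigr),\qquad \beta=\varepsilon e^{-\eta\tau}\sin(\beta\tau).$$
The standing hypothesis $\varepsilon\int_{-\tau}^{0}e^{\eta u}\,du>1$, i.e.\ $\varepsilon(1-e^{-\eta\tau})>\eta$ (and $\varepsilon\tau>1$ when $\eta=0$), forces $\cos(\beta\tau)=1$ in the first equation; the second then forces $\beta=0$, a contradiction. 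Having excluded purely imaginary roots, a standard argument-principle count on a contour made of a long imaginary segment $[-iR,iR]$ and a right semicircle (along which $F(\lambda):=1-g(\lambda)\to 1$ as $R\to\infty$) shows that the number of roots with $\mathrm{Re}(\lambda)>0$ is exactly one, accounted for by $\lambda_0$.
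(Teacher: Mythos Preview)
Your treatment of the real root $\lambda_0$, the bound $\lambda_0<\varepsilon-\eta$, and the strict inequality $\alpha<\lambda_0$ for every non-real root is correct and essentially identical to the paper's (you work with $g=1-\varphi$ and phrase the complex-root step via the triangle inequality for integrals, while the paper separates real and imaginary parts of the characteristic equation; these are the same argument). The second and third bullets then follow exactly as you say.

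The difference, and the gap, is in the first bullet. You exclude purely imaginary roots correctly (your computation actually forces $\cos(\beta\tau)>1$, an immediate contradiction, rather than $\cos(\beta\tau)=1$ as you wrote, but the conclusion stands). However, the ``standard argument-principle count'' you invoke is not carried out, and it is not as automatic as you suggest. Knowing that $F\to 1$ along the large semicircle and the far ends of the imaginary axis only tells you that the winding of $F\circ\gamma_R$ around $0$ comes from the finite portion of the imaginary axis; you still have to show that the curve $\beta\mapsto F(i\beta)$ winds exactly once. Since $\mathrm{Im}\,F(i\beta)=-\varepsilon\int_{-\tau}^{0}e^{\eta u}\sin(\beta u)\,du$ can vanish at several $\beta\neq 0$, with $\mathrm{Re}\,F(i\beta)$ possibly negative there, extra crossings of the negative real axis are not ruled out a priori. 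The count can be completed, for instance by applying Rouch\'e to the auxiliary function $a(\lambda)=(\lambda+\eta)\varphi(\lambda)=\lambda+\eta-\varepsilon+\varepsilon e^{-(\lambda+\eta)\tau}$ against $h(\lambda)=\lambda+\eta-\varepsilon$ on the right half-disc (using that the standing hypothesis gives $\varepsilon e^{-\eta\tau}<\varepsilon-\eta\le|h(i\beta)|$), but this step is missing from your write-up.

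The paper avoids all of this with a one-line real-part estimate on the same auxiliary function $a$. Having already shown $p<\lambda_0$ for any non-real root $p+iq$, it observes that if $p\in(0,\lambda_0)$ then
\[
0=\mathrm{Re}\,a(p+iq)=p+\eta-\varepsilon+\varepsilon e^{-(p+\eta)\tau}\cos(q\tau)\le p+\eta-\varepsilon+\varepsilon e^{-(p+\eta)\tau}=a(p)=(p+\eta)\varphi(p)<0,
\]
a contradiction. This elementary inequality replaces your contour argument and is worth knowing: once dominance of $\lambda_0$ is established, it immediately excludes non-real roots from the strip $0<\mathrm{Re}\,\lambda<\lambda_0$ without any complex-analytic machinery.
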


\begin{proof}
We first study real roots. Consider the function
$$\varphi: \R \to \R, \qquad \varphi \left( \lambda \right)
=1-\varepsilon \int_{-\tau }^{0}e^{\left( \lambda +\eta \right) u}du.
$$
Since $\varphi'(\lambda)=
-\varepsilon\int_{-\tau }^{0}u e^{\left( \lambda +\eta \right) u}du>0$,
then $\varphi$ is a strictly increasing function of $\lambda$.
Moreover
$$\lim_{\lambda \rightarrow -\infty
}\varphi \left( \lambda \right) =-\infty,   \qquad\lim_{\lambda \rightarrow
+\infty }\varphi \left( \lambda \right) =1$$
and
$$\varphi(0)=1-\varepsilon \int_{-\tau}^{0}e^{\eta u}du \ge 1-\varepsilon \tau,
\qquad \varphi(\varepsilon-\eta)=e^{-\varepsilon \tau}>0.
$$
The above equation implies that there exists a unique real root of
the equation $\varphi (\lambda)=0$.
Such root belongs to $(-\infty,\varepsilon - \eta)$.

Moreover, all complex roots $\lambda =p+iq$ satisfy the system
$$
1-\varepsilon \int_{-\tau }^{0}e^{\left( p +\eta \right) u} \cos (qu)du=0
$$
$$
\varepsilon \int_{-\tau }^{0}e^{\left( p +\eta \right) u} \sin(qu)du=0
$$
From the first equation we get that
\[
1<\varepsilon \int_{-\tau }^{0}e^{\left( p+\eta \right) u}du =1- \varphi (p).
\]
This implies that $\varphi (p)<0$ which implies $p< \lambda_0$.

Finally, let $1-\varepsilon \int_{-\tau}^{0}e^{\eta u}du<0$ and consider
the function
$$a \left( \lambda \right) :=(\lambda+ \eta) \varphi
\left( \lambda \right).$$
It can be easily seen that $a \left( \lambda
\right) $ rewrites
\[
a \left( \lambda \right) =\lambda +\eta -\varepsilon \left(
1-e^{-\left( \lambda +\eta \right) \tau }\right)
\]
and that all complex roots of the characteristic equation $\varphi(\lambda)=0$ are also zeros of $a(\cdot)$.
Let us assume that there exists a complex root, $\lambda =p+iq$
with $p \in (0,\lambda_0).$ Then, we have
\[
\func{Re}\left( a\left(\lambda \right) \right)=
p+\eta -\varepsilon +\varepsilon e^{-\left( p+\eta \right) \tau }\cos \left(
q\tau \right) <p+\eta -\varepsilon +\varepsilon e^{-\left( p+\eta \right)
\tau }= a(p)<0
\]
which contradict the fact that $\func{Re}\left( a \left( \lambda
\right) \right) >0.$
The rest of the claim is immediate.
\end{proof}

%

\bigskip

Now we have, as a consequence, the following result.

\begin{proposition}[Existence of admissible paths]
\label{prop:kM}
\begin{itemize}
\item[]
  \item[(i)]
Fix an initial datum $\left( k_{0},c_{0}\left( \cdot\right)
\right) \in \left(\mathbb{R}_{+}\times
L^{1}\left( [ -\tau ,0) ;\mathbb{R}_{+}\right) \right) $.
The set $\mathcal{C}\left( k_{0},c_{0}(\cdot)\right) $
is nonempty if and only if the control
$c^{m}\left( \cdot\right)$ introduced in (\ref{eq:controlnew}) is admissible, i.e. such that $k^M(t)\ge 0$
for every $t \ge 0$.
  \item[(ii)]
In particular, if $\lambda_0\ge A-\delta$ then for any
$c_0(\cdot)\in L^{1}\left( [ -\tau ,0) ;\mathbb{R}_{+}\right)  $,
such that $c_0(t)> 0$ on a set of positive Lebesgue measure
we have
$\mathcal{C}\left( k_{0},c_{0}(\cdot)\right) = \emptyset$.
\end{itemize}
\end{proposition}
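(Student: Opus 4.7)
The plan splits along the two claims. For part (i), I would invoke Proposition~\ref{pr:estimateck} directly. The ``only if'' direction is immediate: any $c(\cdot)\in\mathcal{C}(k_0,c_0(\cdot))$ produces a trajectory $k(\cdot)\ge 0$ dominated by $k^M(\cdot)$ via (\ref{eq:statenew}), hence $k^M(\cdot)\ge 0$. For the ``if'' direction I would exhibit $c^m(\cdot)$ itself as an admissible strategy: $c^m\ge 0$ follows by induction on the intervals $[0,\tau],[\tau,2\tau],\ldots$, using that (\ref{eq:controlnew}) writes $c^m(t)$ as an integral with the nonnegative kernel $\varepsilon e^{\eta(u-t)}$ against the past values of $\widetilde{c}^m(\cdot)$; the constraint (\ref{eq:constraints2}) holds with equality by the very definition of $c^m$; and $k^M(\cdot)\ge 0$ is the standing hypothesis.

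For part (ii), the goal is to show that whenever $\lambda_0\ge A-\delta$ and $c_0\ge 0$ is not a.e.\ zero, one has
\[
\int_0^{+\infty}c^m(u)e^{-(A-\delta)u}\,du=+\infty,
\]
which forces the bracket in (\ref{eq:statenew}) to become negative in finite time; by part (i) this yields $\mathcal{C}(k_0,c_0(\cdot))=\emptyset$. To estimate $c^m$ I would differentiate (\ref{eq:controlnew}) and recast it, for $t\ge\tau$, as the linear constant-coefficient DDE $\dot{c}^m(t)=(\varepsilon-\eta)c^m(t)-\varepsilon e^{-\eta\tau}c^m(t-\tau)$, whose characteristic equation is precisely $\varphi(\lambda)=0$ of Proposition~\ref{pr:careq}. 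The computation $\varphi'(\lambda_0)>0$ already appearing in the proof of that proposition shows that $\lambda_0$ is a simple root, and Proposition~\ref{pr:careq} places it strictly above the real parts of all other roots.

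Classical spectral theory for linear DDEs (see, e.g., \cite{HaleIntro}) then yields a decomposition $c^m(t)=\alpha_0 e^{\lambda_0 t}+R(t)$ with $|R(t)|\le Ce^{\mu t}$ for some $\mu<\lambda_0$, where $\alpha_0$ is a linear functional of the initial datum $c_0(\cdot)$. The main obstacle is to prove that $\alpha_0>0$ whenever $c_0\ge 0$ is not identically zero: this is a Perron--Frobenius-type statement for the positive semigroup generated by the DDE, which I would obtain by expressing $\alpha_0$ as an integral of $c_0$ against a strictly positive adjoint eigenfunction corresponding to $\lambda_0$. Once $\alpha_0>0$ is secured, for large $t$ we have $c^m(t)\ge\frac{\alpha_0}{2}e^{\lambda_0 t}$, so the integrand $c^m(u)e^{-(A-\delta)u}$ is either uniformly bounded below by a positive constant (when $\lambda_0=A-\delta$) or grows exponentially (when $\lambda_0>A-\delta$); in either case the integral diverges, completing the argument.
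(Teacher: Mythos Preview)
Your proposal is correct and follows essentially the same route as the paper. For part~(i) the paper says only that it is ``an immediate corollary of Proposition~\ref{pr:estimateck}''; your filling in of the nonnegativity of $c^m$ and the two directions is accurate. For part~(ii) both arguments rest on the spectral decomposition of $c^m$ and the positivity of the leading coefficient; the paper, instead of invoking Perron--Frobenius abstractly, writes down the explicit Laplace-transform formula $p_0=\psi(\lambda_0)/\varphi'(\lambda_0)$ with $\psi(\lambda_0)=\int_{-\tau}^{0}c_0(u)e^{-\lambda_0 u}\,du$ (citing \cite{Bel-Coo}), from which $p_0>0$ is immediate, and then computes $k^M(t)$ directly to see it tends to $-\infty$ rather than phrasing it as divergence of the integral.
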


\begin{proof}
The first statement is an immediate corollary of Proposition
\ref{pr:estimateck}.

Concerning the second statement we observe
first that the solution
of the equation (\ref{eq:controlnew}) can be written with a series expansion
(see e.g. Corollary 6.4, p.168 of \cite{Diekmann})
as follows
\begin{equation}
\tilde c^m(t) =\sum\limits_{r=0}^{\infty}p_r (t)
e^{\lambda_{r}t} \label{SolDFDE}
\end{equation}
where $\{\lambda_r\}_{r \in \mathbb{N}}$ is the sequence of the roots of
the characteristic equation (\ref{eq:carnew}) and the $p_r (t)$ are
polynomials of degree less or equal to $m(r)-1$ where $m(r)$ is the multiplicity of $\lambda_r$.
Now, using e.g. \cite{Bel-Coo}, Section 6.7 (in particular Theorem 6.5)
we can explicitly compute the coefficients of such solutions
by using the Laplace transform.

In particular, since $\lambda_0$ is a simple root, we have
$$
p_0=\frac{\psi (\lambda_{0})}{\varphi^{\prime}(\lambda_{0})}
$$
where
$$
\varphi(\lambda)=1- \eps\int_{-\tau}^0 e^{(\lambda+\eta)u} du
$$
and
$$\psi (\lambda)= (1-\varphi(\lambda))\int_{-\tau}^0 c_0(u)e^{-\lambda u} du
$$
Clearly, if $c_0(\cdot)>0$ on a set of positive Lebesgue measure
we have that $p_0>0$ and so the leading term of the series (\ref{SolDFDE})
is $p_0e^{\lambda_0 t}$ and all the others are complex
exponentials with negative real part.
So the corresponding state trajectory $k_M(\cdot)$ is
$$
k^{M}\left( t\right) =e^{\left( A-\delta \right) t}\left[ k_{0}-
\int_{0}^{t}p_0 e^{\left(\lambda_0 -(A-\delta) \right) u}du + \xi (t)\right]
$$
where $\xi(\cdot): [0,+\infty) \to \R$ is a bounded function
coming from the lower order term of the series (\ref{SolDFDE}).
When $\lambda_0\ne A - \delta$ it follows
$$
k^{M}\left( t\right) =e^{\left( A-\delta \right) t}
\left[ k_{0}+\frac{p_0}{\lambda_0-(A-\delta)} + \xi(t) \right]
-\frac{p_0}{\lambda_0-(A-\delta)} e^{\lambda_0 t}
$$
Clearly, when $\lambda_0>A-\delta$ the limit of the above expression
is $-\infty$, so the claim follows.
When  $\lambda_0\ne A - \delta$ we have
$$
k^{M}\left( t\right) =e^{\left( A-\delta \right) t}\left[ k_{0}-
p_0 t+ \xi (t)\right]
$$
and again the limit of the above expression
is $-\infty$, so the claim follows.
\end{proof}

%
%

\bigskip

Due to the above proposition it makes sense to study the social planner problem
when
\begin{equation}\label{eq:hpgrowth}
\lambda_0 <A-\delta.
\end{equation}
and the initial datum $c_0(\cdot)$ is small enough so to guarantee
that the corresponding $k_M(\cdot)$ is always strictly positive.
It is clear that $\lambda_0$ is the lowest possible growth rate of the habit: this growth rate has to be lower than the real interest rate of the economy ($r=A-\delta$), which coincides with the maximum growth rate of capital obtainable from the capital accumulation equation when consumption is set to zero. In fact an economy cannot sustain over time a growth rate which exceeds the real interest rate because capital does not accumulate sufficiently fast to sustain the higher and higher consumption.

Note in particular that (\ref{eq:hpgrowth}) is surely true if
\begin{equation}\label{eq:hpgrowthbis}
\varepsilon - \eta \le A-\delta.
\end{equation}
or, since $\frac{\varepsilon}{ \eta}(1-e^{-\eta \tau})<1 \Leftrightarrow \lambda_0<0$, if
\begin{equation}\label{eq:hpgrowthter}
\frac{\varepsilon}{ \eta}(1-e^{-\eta \tau})<1 \qquad and \qquad A-\delta>0.
\end{equation}
From now on, we will focus on the case
\begin{equation}\label{eq:hpgrowthfinal}
\lambda_0<\eps - \eta \le 0 < A-\delta.
\end{equation}
The condition $\eps - \eta \le 0$ is usually assumed in the economic literature (e.g. Constantinides \cite{const}) because it prevents the economy to asymptotically converge to the corner solution $c(t)=h(t)$.

We finally observe that strict positivity of $k_M(\cdot)$ is guaranteed by assuming that, beyond (\ref{eq:hpgrowth})
\begin{equation}\label{eq:hpconstrsuff}
k_0 > \int_0^{+\infty}e^{-s(A-\delta)}c^m(s)ds
\end{equation}
where $c^m(\cdot)$ is the unique solution of (\ref{eq:controlnew}). The economic intuition behind this restriction on the initial condition of capital will be explained in Section \ref{sec:equivalence}.

\bigskip


Therefore conditions (\ref{eq:hpgrowth}) and (\ref{eq:hpconstrsuff}) are necessary to guarantee that the value function $V$ is not always $-\infty$ at a given point. Here we give a sufficient condition for the finiteness of $V$.

%

\begin{proposition}[Finiteness of the value function $V$]
Let us consider an initial datum $\left( k_{0},c_{0}\left( \cdot\right)
\right) \in \left(\mathbb{R}_{+}\times
L^{1}\left( [ -\tau ,0) ;\mathbb{R}_{+}\right) \right) $.
Assume that (\ref{eq:hpgrowth}) and (\ref{eq:hpconstrsuff}) hold true, so
$\mathcal{C}\left( k_{0},c_{0}(\cdot)\right) \ne \emptyset$.
If
\begin{equation}\label{eq:Vfinite}
\rho >(A-\delta ) \left( 1-\gamma \right),
\end{equation}
then the value function is always finite.
\end{proposition}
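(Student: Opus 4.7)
The plan is to split the analysis into the two regimes $\gamma\in(0,1)$ and $\gamma>1$, handling in each case only the potentially problematic side of the inequality $-\infty<V<+\infty$, and to use in a crucial way a budget-type estimate coming from $k(t)\ge 0$. Namely, from (\ref{eq:DDE}) the constraint $k(t)\ge 0$ for all $t\ge 0$ is equivalent to
\begin{equation*}
\int_{0}^{\infty}e^{-(A-\delta)u}c(u)\,du\le k_{0},
\end{equation*}
which I will use as the single admissibility-derived inequality in the whole argument. I also note that by (\ref{eq:hpconstrsuff}) the control $c^{m}(\cdot)$ is itself admissible, hence $V\ge J(k_0,c_0;c^{m})$.

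\emph{Case $\gamma\in(0,1)$ (the only issue is $V<+\infty$).} Here the integrand is nonnegative, and the choice $c=c^m$ gives $c-h\equiv 0$ with $0^{1-\gamma}=0$, so $V\ge 0>-\infty$ trivially. For the upper bound I use $h(t)\ge 0$, so $(c-h)^{1-\gamma}\le c^{1-\gamma}$, and then factor
\begin{equation*}
c(t)^{1-\gamma}e^{-\rho t}=\bigl[c(t)e^{-(A-\delta)t}\bigr]^{1-\gamma}\,e^{-\mu t},\qquad \mu:=\rho-(A-\delta)(1-\gamma)>0,
\end{equation*}
the positivity of $\mu$ being exactly (\ref{eq:Vfinite}). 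Hölder's inequality with conjugate exponents $p=1/(1-\gamma)$ and $q=1/\gamma$ yields
\begin{equation*}
\int_0^{\infty}c(t)^{1-\gamma}e^{-\rho t}dt\le\left(\int_0^{\infty}c(t)e^{-(A-\delta)t}dt\right)^{\!1-\gamma}\!\!\left(\int_0^{\infty}e^{-\mu t/\gamma}dt\right)^{\!\gamma}\!\!\le k_0^{1-\gamma}\Bigl(\tfrac{\gamma}{\mu}\Bigr)^{\!\gamma},
\end{equation*}
uniformly in admissible $c$, hence $V<+\infty$.

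\emph{Case $\gamma>1$ (the only issue is $V>-\infty$).} Since $\frac{(c-h)^{1-\gamma}}{1-\gamma}\le 0$, we have $V\le 0$. To obtain a lower bound I build an explicit admissible strategy. Let $\Delta:=k_0-\int_0^{\infty}e^{-(A-\delta)s}c^m(s)ds>0$ by (\ref{eq:hpconstrsuff}), pick $\alpha\in\bigl(0,(A-\delta)\Delta\bigr)$, and set $c(t):=c^m(t)+\alpha$ for $t\ge 0$. The budget estimate above is satisfied, so $c$ is admissible. Writing $h$ in terms of the perturbation and using $c^m\equiv h^m$ gives
\begin{equation*}
c(t)-h(t)=\alpha-\varepsilon\int_{\max(0,t-\tau)}^{t}\alpha\, e^{\eta(u-t)}du\ge \alpha\Bigl[1-\tfrac{\varepsilon(1-e^{-\eta\tau})}{\eta}\Bigr]=\alpha\,\varphi(0),
\end{equation*}
with $\varphi(0)>0$ because $\lambda_0<0$ under (\ref{eq:hpgrowthfinal}); one also has $c(t)-h(t)\le\alpha$. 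Hence $(c-h)^{1-\gamma}$ is sandwiched between two positive constants, so the integrand of $J$ lies between two negative constants times $e^{-\rho t}$, giving $J>-\infty$ and therefore $V\ge J>-\infty$.

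The main obstacle is the case $\gamma>1$: one cannot use $c=c^m$ (which yields $J=-\infty$) and generic perturbations easily violate $c\ge h$ or explode the capital balance. The decisive point is that assumption (\ref{eq:hpgrowthfinal}) forces $\lambda_0<0$, equivalently $\varphi(0)>0$, which in turn keeps the additive perturbation $c^m+\alpha$ strictly above the induced habit; the upper-bound case is entirely standard once the budget inequality is in place, with $\rho>(A-\delta)(1-\gamma)$ serving only to make the residual exponential integrable after the Hölder split.
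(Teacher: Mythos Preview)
Your proof is correct and follows the same overall two-case split as the paper, but the execution differs in a noteworthy way in the case $\gamma\in(0,1)$.

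For $\gamma>1$ your construction $c=c^{m}+\alpha$ is exactly the one the paper uses; your bookkeeping of $c(t)-h(t)$ is in fact slightly more careful than the paper's, which writes $J=\frac{\alpha^{1-\gamma}}{\rho(1-\gamma)}$ as if $c-h\equiv\alpha$, whereas you correctly sandwich $c(t)-h(t)$ between $\alpha\,\varphi(0)$ and $\alpha$. Either way the conclusion $V>-\infty$ follows, and both arguments rely on the standing assumption (\ref{eq:hpgrowthfinal}) (equivalently $\varphi(0)>0$) to keep the perturbed strategy above the habit.

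For $\gamma\in(0,1)$ you take a genuinely shorter route. The paper follows the Fleming--Soner / Freni--Gozzi--Salvadori device: it sets $\zeta(s)=\int_0^s c^{1-\gamma}$, applies H\"older on $[0,s]$ to get $\zeta(s)\le s^{\gamma}\bigl(\int_0^s c\bigr)^{1-\gamma}$, uses the cruder estimate $\int_0^s c\le k_0 e^{(A-\delta)s}$, and then integrates by parts in $s$. You instead extract from $k(t)\ge 0$ the sharp discounted-budget inequality $\int_0^{\infty}e^{-(A-\delta)t}c(t)\,dt\le k_0$ and apply H\"older once on $[0,\infty)$ with exponents $\tfrac{1}{1-\gamma},\tfrac{1}{\gamma}$. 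This is more direct, avoids the integration by parts entirely, and produces an explicit constant $(\gamma/\mu)^{\gamma}$; the paper's detour buys nothing extra here.
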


\begin{proof}
To prove the claim it is enough to prove the following:
\begin{itemize}
\item[(i)] If $\gamma \in (0,1)$ then there exists $M_+>0$ such that, for all
$\left(k_{0},c_{0}\left(\cdot\right) \right)$ in the space $\left(
\mathbb{R}_{+}\times L^{1}\left( [ -\tau ,0) ,\mathbb{R}_{+}\right) \right)$,
  \[
0 \le V\left( k_{0},c_{0}\right) \le M_+ k_0 ^{ 1-\gamma  }.
\]
\item[(ii)] If $\gamma \in (1,+\infty)$ and (\ref{eq:hpgrowthter}) holds, then there exists $M_-<0$ such that, for all
$\left(k_{0},c_{0}\left(\cdot\right) \right)$ in the space $\left(
\mathbb{R}_{+}\times L^{1}\left([ -\tau ,0) ,\mathbb{R}_{+}\right) \right)$,
  \[
M_- k_0 ^{ 1-\gamma  } \le V\left( k_{0},c_{0}\right) \le 0.
\]
\end{itemize}

We prove first (i).
The first inequality is obvious since
for $\gamma \in (0,1)$ we always have $J\left( k_{0},c_{0}(\cdot) ;c(\cdot)\right) \ge 0$.

Concerning the other inequality setting (Fleming and Soner \cite
{fleming}, p.30-32, Freni et al. \cite{freni})
\[
\zeta\left( s\right) =\int_{0}^{s}c\left( u \right) ^{1-\gamma }du
\]
and applying H\"{o}lder's inequality to $\zeta\left( s\right)
=\int_{0}^{s}s^{1-\gamma }\left( \frac{c\left( u \right) }{s}\right)
^{1-\gamma }du $ yields to
\begin{eqnarray*}
\zeta\left( s\right) &\leq &\left( \int_{0}^{s}s^{\frac{1-\gamma }{\gamma }
}du \right) ^{\gamma }\left( \int_{0}^{s}\left( \frac{c\left( u
\right) }{s}\right) ^{\frac{1-\gamma }{1-\gamma }}du \right) ^{1-\gamma }
\\
&\leq &s^{\gamma }\left( \int_{0}^{s}c\left( u \right) du \right)
^{1-\gamma }
\end{eqnarray*}
as $c\left( u \right) =\left( A-\delta \right) k\left( u \right) -\dot{
k}\left( u \right) $
\[
\int_{0}^{s}c\left( u\right) du =\int_{0}^{s}\left( A-\delta \right)
k\left( u \right) du -k\left( s\right) +k\left( 0\right)
\]%
Now, according to equation (\ref{eq:DDE}), $k\left( s\right) \leq k\left(
0\right) e^{\left( A-\delta \right) s}$. Thus using also that $k(s) \ge 0$ for $s \ge 0$
we get
\begin{equation}\label{eq:estimateintc}
\int_{0}^{s}c\left( u
\right) du \leq k_0 e^{\left( A-\delta \right) s}
\end{equation}
and so
\[
\zeta \left( s\right) \leq s^{\gamma }
k_0^{1-\gamma }e^{\left( 1-\gamma \right)
\left( A-\delta \right) s}
\]
Now we have
$$
J\left(  k_{0},c_{0}(\cdot);c(\cdot)\right)  \leq
\int_{0}^{+\infty}\frac{c(s)^{1-\gamma }}{1-\gamma}
e^{-\rho  s}ds
$$
and, integrating by parts and using (\ref{eq:estimateintc}),
\[
J\left(  k_{0},c_{0}(\cdot);c(\cdot)\right)  \leq
\left( \frac{k_0 ^{ 1-\gamma  }}{1-\gamma }
\int_{0}^{+\infty}s^{\gamma }e^{\left( \left( 1-\gamma \right) \left( A-\delta
\right) -\rho \right) s}ds\right)
\]
which gives the claim.

\medskip

Now we prove (ii).
The second inequality is obvious since
for $\gamma \in (1,+\infty)$ we always have $J\left( k_{0},c_{0}(\cdot) ;c(\cdot)\right) \le 0$.

Concerning the other inequality we observe that, calling $c^m(\cdot)$ the unique solution of (\ref{eq:controlnew}) we have, thanks to (\ref{eq:hpconstrsuff}) and (\ref{eq:hpgrowthter}) that,
for $\alpha>0$ small enough, the control strategy defined as
$c_1(t)=c^m(t) + \alpha $ ($t \ge 0$) is admissible.

Indeed, calling $k_1(\cdot)$ the associated state trajectory we have
$$
k_1(t)=e^{\left( A-\delta \right) t}\left[ k_{0}-
\int_{0}^{t} e^{-s\left(A-\delta \right) u} (c^m(u) + \alpha )du\right]
=
$$
$$=
e^{\left( A-\delta \right) t}\left[ k_{0}-
\int_{0}^{t} e^{-s\left(A-\delta \right) u} c^m(u) du
-\frac{\alpha}{A-\delta}
\right]
$$
This remain always positive if
$$\frac{\alpha}{A-\delta}\le k_{0}-
\int_{0}^{+\infty} e^{-s\left(A-\delta \right) u} c^m(u) du
$$
which is possible by (\ref{eq:hpconstrsuff}).
Moreover the control $c_1(\cdot)$ satisfy the constraint (\ref{eq:constraints2})
since, substituting it into (\ref{eq:controlnew}) we get
$$
\alpha \ge \alpha \frac{\eps(1- e^{-\eta \tau})}{\eta}
$$
which is always true for positive $\alpha$ thanks to (\ref{eq:hpgrowthter}).

Since $c_1(\cdot)$ is admissible we have
$$
V(k_0,c_0(\cdot))\ge J(k_0,c_0(\cdot); c_1(\cdot))= \frac{\alpha^{1-\gamma}}{\rho(1-\gamma)}
$$
Now it is clear from what said above that it must be $\alpha\le (A-\delta)k_0$, so the claim follows taking $M_-=\frac{(A-\delta)^{1-\gamma}}{\rho(1-\gamma)}$.
\end{proof}

\bigskip

Observe that condition (\ref{eq:Vfinite})
is the same condition which guarantees bounded utility in a standard AK model.
Therefore habits formation does not affect this condition.

Condition (\ref{eq:Vfinite}) will be assumed from now on without repeating it.

%
%

\subsection{The equivalent infinite dimensional problem\label{sec:HJB}}

We now rewrite our problem as an optimal control problem for ODE's in an infinite dimensional space. Note that, differently from what has been done in the previous literature
(see e.g. \cite{FaGo}) here the state equation (\ref{eq:stateeq}) is not a DDE so the past of the control does not appear there.
The past of the control strategy appears in the objective functional (\ref{eq:objective})  and in the constraint (\ref{eq:constraints2}). For this reason the way we choose to rewrite our problem is different from the one given in the previous literature.

We work in Hilbert space
$M^{2}=\mathbb{R}\times L^{2}\left( \left[ -\tau ,0\right);\R \right)$, with the scalar
product defined by
\[
\left\langle \left( x_{0},x_{1}\left( \cdot\right) \right) ,\left(
y_{0},y_{1}\left( \cdot\right) \right) \right\rangle_{M^{2}}
=x_{0}y_{0}+\int_{-\tau }^{0}x_{1}\left( s\right) y_{1}\left(
s\right) ds
\]
for every $x=\left( x_{0},x_{1}\left( \cdot\right) \right) $ and $y=\left(
y_{0},y_{1}\left( \cdot\right) \right) $ in $M^{2}$.

We first define, following e.g. \cite{VinterKwong}, the structural state of the
infinite dimensional system we want to study.
\begin{definition}[Structural state]
Given an initial datum $(k_{0},c_{0}(\cdot))\in \mathbb{R}\times
L^{1}\left( \left[ -\tau ,0\right);\R \right)$,
and a control strategy $c\left( \cdot\right) \in
L_{loc}^{1}\left(\left[ 0,+\infty \right) ;\mathbb{R}\right) $
we define the structural state of our controlled dynamical system at time $t\geq 0$
as the element of $M^2$:
\[
X_{\left( k_{0},c_{0}\left( \cdot\right)\right) ,c\left( \cdot\right)  }\left(
t\right) =\left( k_{ k_{0},c\left( \cdot\right)
 }\left( t\right) ,s\mapsto \varepsilon \int_{-\tau }^{s}\tilde c\left(
t+u-s\right) e^{\eta u}du\right)
\]
\end{definition}
In the following we would write $X(t)$ for $X_{\left( k_{0},c_{0}\left( \cdot\right)\right)
,c\left( \cdot\right) }(t)$ when no confusion is possible. The second component
of $X(t)$ is a function of $s\in [-\tau,0)$ and we will usually write $X_1(t)[s]$
when we mean its value at time $t$ for given $s\in [-\tau,0)$.

We now define the unbounded operator $\A$ on $M^{2}$ by
$$D\left( \A \right) =\left\{ \left( x_{0},x_{1}\left( \cdot\right)
\right) \in M^{2},\; x_{1}\left( \cdot\right) \in W^{1,2}\left( \left[ -\tau ,0\right] ;
\mathbb{R}\right) ,\, x_{1}\left( -\tau \right) =0\right\} $$
\[
\A x=\left( \left( A-\delta \right) x_{0},-x'_{1}(\cdot)\right)
\]
Moreover we define the operators
$$\B : \mathbb{R} \rightarrow M^{2}, \qquad
\B c=c\left( -1,s\mapsto \varepsilon e^{\eta s}\right)
$$
and
$$\D :\R \times C([-\tau,0];\R) \subset M^{2}\rightarrow \mathbb{R}, \qquad
\D x=x_{1}\left( 0\right).
$$
Now we show that the structural state above satisfy a suitable ODE in the space $M^2$.
\begin{theorem}
Given any initial datum
$\left( k_{0},c_{0}\left( \cdot\right) \right) \in
\mathbb{R}\times L^{1}\left( \left[ -\tau ,0\right);\R \right)$
and any control strategy
$c\left( \cdot\right) \in L_{loc}^{1}\left( \left[ 0,\infty \right) ;\mathbb{R}
\right) $ the associated structural state is the unique solution of the equation
\begin{equation}\label{eq:ss1}
\left\{
\begin{array}{l}
\frac{dX\left( t\right) }{dt}= \A X\left( t\right) +\B c\left( t\right) \\
\\
X\left( 0\right) =\left( k_{0},s\mapsto \varepsilon \int_{-\tau
}^{s}c_{0}\left( u-s\right) e^{\eta u}du\right).
\end{array}
\right.
\end{equation}
\end{theorem}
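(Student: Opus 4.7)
The plan is to show that $X(\cdot)$ satisfies the variation of constants formula
\[
X(t)=e^{t\mathcal{A}}X(0)+\int_{0}^{t}e^{(t-r)\mathcal{A}}\mathcal{B}\,c(r)\,dr,\qquad t\ge 0,
\]
i.e.\ that it is the unique \emph{mild} solution of (\ref{eq:ss1}) in $M^{2}$. The mild formulation is the natural one here: for a generic $c(\cdot)\in L^{1}_{loc}$ the second component of $X(t)$ need not lie in $D(\mathcal{A})$, so classical differentiability in $t$ inside $M^{2}$ cannot be expected without extra regularity on the control.

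First I would verify that $\mathcal{A}$ generates a $C_{0}$-semigroup on $M^{2}$. The operator decouples into multiplication by $A-\delta$ on $\R$, which generates $t\mapsto e^{(A-\delta)t}$, and $-\partial_{s}$ on $L^{2}([-\tau,0);\R)$ with boundary condition $x_{1}(-\tau)=0$, which is the classical generator of the nilpotent translation semigroup
\[
(T(t)f)(s)=f(s-t)\,\mathbf{1}_{\{s-t\ge -\tau\}}.
\]
Hence $e^{t\mathcal{A}}(x_{0},x_{1})=\bigl(e^{(A-\delta)t}x_{0},\,T(t)x_{1}\bigr)$. Since $\mathcal{B}:\R\to M^{2}$ is bounded and $c(\cdot)\in L^{1}_{loc}$, standard semigroup theory (see e.g.\ Pazy, or Bensoussan--Da Prato--Delfour--Mitter) guarantees that the displayed integral formula defines a unique element of $C([0,+\infty);M^{2})$, giving both existence and uniqueness for the mild problem.

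The crux is then to identify this mild solution with the structural state $X(\cdot)$, component by component. For the first component the formula reduces to $e^{(A-\delta)t}k_{0}-\int_{0}^{t}e^{(A-\delta)(t-r)}c(r)\,dr$, which by (\ref{eq:DDE}) equals $k_{k_{0},c(\cdot)}(t)$. For the second component, the translation semigroup gives explicitly
\[
(e^{t\mathcal{A}}X(0))_{1}(s)=\varepsilon\,\mathbf{1}_{\{s-t\ge -\tau\}}\int_{-\tau}^{s-t}c_{0}(u-s+t)\,e^{\eta u}\,du,
\]
\[
\left(\int_{0}^{t}e^{(t-r)\mathcal{A}}\mathcal{B}\,c(r)\,dr\right)_{1}(s)=\varepsilon\int_{\max(0,\,t-s-\tau)}^{t}c(r)\,e^{\eta(s-t+r)}\,dr.
\]
Splitting the defining integral $X_{1}(t)[s]=\varepsilon\int_{-\tau}^{s}\widetilde{c}(t+u-s)e^{\eta u}\,du$ at the threshold $u=s-t$ where $\widetilde{c}$ switches from $c_{0}$ to $c$, and changing variables $r=t+u-s$ on the piece where $\widetilde{c}=c$, then yields exactly the sum of the two expressions above.

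The main obstacle is precisely this bookkeeping in the second component: one has to keep track of which characteristics of the transport $-\partial_{s}$ originate from the initial profile $\phi_{0}(s)=\varepsilon\int_{-\tau}^{s}c_{0}(u-s)e^{\eta u}\,du$ and which are injected at the boundary $s=0$ by $\mathcal{B}c(r)$ at times $r\in(0,t)$, and to align this with the natural splitting of the convolution defining $X_{1}(t)[s]$ according to whether $\widetilde{c}$ is evaluated on the past history or on the realized control. Once this Duhamel-type matching is carried out, uniqueness is inherited from the general semigroup argument of the previous step, completing the proof.
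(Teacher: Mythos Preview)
Your proposal is correct and in spirit matches the paper's own proof, which is extremely terse: it simply states that the claim ``easily follows by the definition of the structural state and of the operators $\A$ and $\B$'', and refers to Bensoussan et al.\ for uniqueness. You take essentially the same two-step route (verify that the structural state solves the equation, then invoke standard linear evolution equation theory for uniqueness), but you make one methodological choice the paper leaves implicit: you work with the \emph{mild} formulation via the variation of constants formula rather than differentiating $X(t)$ directly. This is a sensible refinement, since, as you note, for $c(\cdot)\in L^{1}_{loc}$ the second component of $X(t)$ need not lie in $D(\A)$ and classical differentiability in $M^{2}$ is not guaranteed. Your explicit identification of $e^{t\A}$ as the product of $e^{(A-\delta)t}$ with the nilpotent left-translation semigroup, and the Duhamel matching of the two pieces of $X_{1}(t)[s]$ against the semigroup term and the convolution term, is exactly the bookkeeping the paper omits; it buys you a clean statement of what ``solution'' means here and makes the uniqueness citation unambiguous.
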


\begin{proof}
The proof easily follows by the definition of the structural state
and of the operators $\A$ and $\B$.
Uniqueness of the solution is similar to Bensoussan et al.
(\cite{bensousan92}, Theorem 5.1, p.282).
\end{proof}

\bigskip

Now we consider the ODE (\ref{eq:ss1}) with generic initial datum $x\in M^2$
and call $X(t;x,c(\cdot))$ (or simply $X(t)$ when clear from the context)
the unique solution of it for a given
control strategy $c(\cdot)\in L^1_{loc}([0,+\infty);\R_+)$.
We take (\ref{eq:ss1}) as state equation and write a control problem equivalent
to our problem ({\bf{P}}).

The constraint $c(t)\geq \varepsilon \int_{t-\tau }^{t}c(u)e^{\eta (u-t)}du$ writes
\[
c\left( t\right) \geq X_{1}\left( t\right)[0]={\cal D}X(t)
\]
So the set of admissible control strategies for a given initial datum in $x\in M^{2}$
is given by
\[
\C_{ad}\left( x\right) =\left\{ c\left( \cdot\right) \in L_{loc}^{1}\left( \left[
0,\infty \right) ;\mathbb{R}\right) ,\text{ such that } X_0(t) \ge 0,\,
c(t) \ge 0, \, c\left( t\right) \geq X_{1}(t)[0]
\text{ for all }t\right\}
\]

The functional to be maximized becomes
\[
J_{0}\left( x\,;c\left( \cdot\right) \right) :=\int_{0}^{\infty }\frac{
(c(t)-\D X(t))^{1-\gamma }}{1-\gamma }e^{-\rho t}dt
\]
The value function is defined as
\[
V_{0}\left( x\right) :=\max_{c\in \C_{ad}\left( q\right) }J_{0}\left( x\,;c\left(
\cdot\right) \right)
\]
where we set $V_{0}\left( x\right) =-\infty $ if $\C_{ad}\left( x\right) $ is empty.


\bigskip

We now derive the adjoints of the operators $\A$, $\B$ and $\D$.

\begin{lemma}
The adjoint of $\A$ in $M^2$ is the operator
$\A^{\ast }:D\left( \A^{\ast }\right)\subset M^2 \rightarrow M^{2} $
defined as
\[
\left\{
\begin{array}{c}
D\left( \A^{\ast }\right) =\left\{ \left( y_{0},y_{1}\left( \cdot\right) \right)
\in M^{2}: \, y_{1}\left( \cdot\right) \in W^{1,2}\left( \left[ -\tau ,0\right] ;\mathbb{R}\right)
\hbox{ and }
\, y_1(0)=0 \right\}
\\
\\
\A^*\left( y_{0},y_{1}\left( \cdot\right) \right)=\left( \left( A-\delta
\right) y_{0},s\mapsto \frac{dy_{1}\left( s\right) }{ds}\right)
\end{array}
\right.
\]
\end{lemma}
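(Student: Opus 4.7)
The plan is to verify the identity $\langle \A x, y\rangle_{M^2} = \langle x, \A^* y\rangle_{M^2}$ by a straightforward integration by parts, and then show that the proposed domain is the full domain of the adjoint via a density/boundary-term argument.

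For the forward direction, fix $x=(x_0,x_1)\in D(\A)$ and $y=(y_0,y_1)$ in the set claimed to equal $D(\A^*)$. Using the definition of $\A$ and of the inner product on $M^2$, I would compute
\[
\langle \A x, y\rangle_{M^2} = (A-\delta)x_0 y_0 - \int_{-\tau}^{0} x_1'(s)\, y_1(s)\, ds.
\]
Since $x_1,y_1 \in W^{1,2}([-\tau,0];\R)$, integration by parts on the second summand yields
\[
-\bigl[x_1(s)y_1(s)\bigr]_{-\tau}^{0} + \int_{-\tau}^{0} x_1(s)\, y_1'(s)\, ds = -x_1(0)y_1(0)+x_1(-\tau)y_1(-\tau) + \int_{-\tau}^{0} x_1(s)\, y_1'(s)\, ds.
\]
The boundary conditions $x_1(-\tau)=0$ (from $x\in D(\A)$) and $y_1(0)=0$ (from $y\in D(\A^*)$) kill the two boundary contributions, leaving exactly $\langle x,\,((A-\delta)y_0,\,y_1'(\cdot))\rangle_{M^2}$, which is the proposed formula for $\A^* y$.

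For the converse, I would show that any $y=(y_0,y_1)\in M^2$ for which $x\mapsto \langle \A x,y\rangle_{M^2}$ extends continuously to all of $M^2$ must lie in the set described. Testing against elements of the form $(0,\varphi)$ with $\varphi\in C_c^{\infty}((-\tau,0))\subset D(\A)$, the functional $\varphi\mapsto -\int_{-\tau}^{0}\varphi'(s)y_1(s)\,ds$ is continuous in $L^2$; by the definition of the weak derivative this forces $y_1\in W^{1,2}([-\tau,0];\R)$ with weak derivative exactly the function featured in the proposed formula. Testing against the first component gives the corresponding $(A-\delta)y_0$ piece.

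The main delicate point will be forcing $y_1(0)=0$. Here I would test with functions $x_1\in W^{1,2}([-\tau,0];\R)$ satisfying $x_1(-\tau)=0$ but $x_1(0)=1$, concentrated in a small neighborhood of $s=0$ so that $\|x_1\|_{L^2}\to 0$ while $x_1(0)$ stays equal to $1$ (for instance, piecewise linear cutoffs of width $\epsilon$). Integration by parts gives $\langle \A x,y\rangle_{M^2} = -y_1(0)+\int x_1 y_1'\,ds$; the integral term tends to $0$ as $\epsilon\to 0$, while continuity of $x\mapsto \langle \A x,y\rangle$ forces the left-hand side to tend to $0$, whence $y_1(0)=0$. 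This isolation of the boundary term is the only nonroutine step; the rest is a clean application of integration by parts in $W^{1,2}$.
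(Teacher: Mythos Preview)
Your proposal is correct and follows essentially the same route as the paper: both arguments compute $\langle \A x,y\rangle_{M^2}$ via integration by parts, use the boundary condition $x_1(-\tau)=0$ from $D(\A)$, and then identify $D(\A^*)$ as exactly those $y$ for which the remaining boundary term $-x_1(0)y_1(0)$ does not obstruct $L^2$-continuity. The paper simply asserts this last identification, whereas you spell out the standard density/cutoff argument that justifies it; your version is more detailed but not methodologically different.
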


\begin{proof}
Take $x\in D(\A)$ and $y\in M^2$.
We have
$$\left\langle \A x,y \right\rangle _{M^{2}} =\left( A-\delta
\right) x_{0}y_{0}-\int_{-\tau }^{0}x_{1}^{\prime }\left( s\right)
y_{1}\left( s\right) ds
$$
$$
=\left( A-\delta \right) x_{0}y_{0}-x_{1}\left( 0\right) y_{1}\left(
0\right) +x_{1}\left( -\tau \right) y_{1}\left( -\tau \right) +\int_{-\tau
}^{0}x_{1}\left( s\right) y_{1}^{\prime }\left( s\right) ds
$$
It follows that the set of all $y\in M^2$ such that
$x \to \left\langle \A x,y \right\rangle _{M^{2}}$
can be extended to a linear continuous functional on $M^2$ is given exactly
by $D(\A^*)$.
Then we have, for $x\in D(\A)$ and $y\in D(\A^*)$,
  $$
 \left\langle \A x,y \right\rangle _{M^{2}}
 =\left( A-\delta \right) x_{0}y_{0} +\int_{-\tau
}^{0}x_{1}\left( s\right) y_{1}^{\prime }\left( s\right) ds
$$
and the claim follows.
\end{proof}
%

\begin{lemma}
The adjoint of $\B$ is
$$\B^{\ast }: M^{2}\rightarrow\mathbb{R}, \qquad
\B^{\ast }\left( y_{0},y_{1}\left( .\right) \right)
=-y_{0}+\varepsilon \int_{-\tau }^{0}e^{\eta s}y_{1}\left( s\right) ds.
$$
Moreover the adjoint of $\D$ is
$$\D^{\ast }:\mathbb{R}
\rightarrow \R \times [C([-\tau,0];\R)]^*, \qquad
\D^{\ast }c=c\left( 0,\delta_{0}\right)
$$
where $\delta_0$ is the Dirac's $\delta$ at the point $t=0$.
\end{lemma}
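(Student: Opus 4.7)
The plan is to verify directly the defining duality identities $\langle \B c, y\rangle_{M^{2}}=c\cdot \B^{\ast}y$ for every $c\in\R$ and $y\in M^{2}$, and analogously $\langle \D x, c\rangle_{\R}=\langle x,\D^{\ast}c\rangle$ for every $x\in D(\D)$ and $c\in\R$. For the first identity, $\B$ is clearly a bounded linear operator from $\R$ into $M^{2}$, so by the Riesz representation theorem $\B^{\ast}$ exists uniquely as a bounded linear operator from $M^{2}$ to $\R$. For the second, since pointwise evaluation at $s=0$ is not $L^{2}$-continuous, $\D$ is only densely defined on $D(\D)=\R\times C([-\tau,0];\R)\subset M^{2}$; I would therefore interpret the adjoint not in the Hilbert sense but in the natural duality between $D(\D)$ and its topological dual, which by Riesz--Markov is $\R\times\mathcal{M}([-\tau,0])$, consistent with the codomain $\R\times [C([-\tau,0];\R)]^{\ast}$ declared in the statement.

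For $\B^{\ast}$ I would simply expand using the definition of $\B$ and of the scalar product in $M^{2}$:
\[
\langle \B c, y\rangle_{M^{2}}=\bigl\langle (-c,\,s\mapsto \varepsilon c\, e^{\eta s}),\,(y_{0},y_{1}(\cdot))\bigr\rangle_{M^{2}}=-c\,y_{0}+c\varepsilon\int_{-\tau}^{0}e^{\eta s}y_{1}(s)\,ds,
\]
and factor $c$ on the right-hand side to read off $\B^{\ast}y=-y_{0}+\varepsilon\int_{-\tau}^{0}e^{\eta s}y_{1}(s)\,ds$. This step is purely mechanical.

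For $\D^{\ast}$ the computation is equally short, $\langle \D x, c\rangle_{\R}=c\,x_{1}(0)$, but one has to rewrite $x_{1}(0)$ as the pairing $\langle\delta_{0},x_{1}\rangle$ between the Dirac mass at $0$ and the continuous function $x_{1}$. Casting the whole expression as $\langle (x_{0},x_{1}(\cdot)),\,(0,c\delta_{0})\rangle$ in the duality between $\R\times C([-\tau,0];\R)$ and $\R\times [C([-\tau,0];\R)]^{\ast}$ identifies $\D^{\ast}c=c(0,\delta_{0})$. The only real obstacle is conceptual rather than computational: one must accept that $\D^{\ast}$ does not live inside the Hilbert space $M^{2}$ but in a strictly larger dual space, precisely because $\D$ fails to be $M^{2}$-continuous. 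Once the correct pairing is fixed, each adjoint identification reduces to a single line.
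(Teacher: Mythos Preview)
Your proposal is correct and follows essentially the same approach as the paper: both compute $\langle \B c, y\rangle_{M^{2}}$ and $\langle \D x, c\rangle_{\R}$ directly from the definitions and read off the adjoints. Your additional remarks on why $\D^{\ast}$ must be interpreted in the duality $\R\times C([-\tau,0];\R)$ versus $\R\times [C([-\tau,0];\R)]^{\ast}$ rather than in $M^{2}$ are accurate and in fact make this point more explicit than the paper's own proof, which simply writes $cx_{1}(0)=c(0\cdot x_{0}+\delta_{0}x_{1})$ and concludes.
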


\begin{proof}
We have
\[
\left\langle \B c,\left( y_{0},y_{1}\left( \cdot\right) \right) \right\rangle
_{_{M^{2}}}=c\left( -y_{0}+\varepsilon \int_{-\tau }^{0}e^{\eta
s}y_{1}\left( s\right) ds\right).
\]
Moreover
\[
\left\langle \D x,c\right\rangle _{_{\mathbb{R}}}=cx_{1}\left( 0\right) =c\left( 0\cdot x_{0}+\delta_{0}x_{1}\right)
\]
and the claim follows.
\end{proof}

\bigskip

\subsection{The HJB equation and its explicit solution}

The Current Value Hamiltonian $H_{CV}$ of our problem is
a real valued function defined on the set
$$
E \subset M^{2}\times M^{2}\times \mathbb{R}, \qquad
E=\left\{ \left( x,p,c\right) \in D\left( A\right) \times M^{2}\times \R
\right\}
$$
and is given by
\begin{equation}\label{eq:HCV}
H_{CV}\left( x,p\,;c\right)  =\frac{(c-\D x)^{1-\gamma }}{1-\gamma }
+\left\langle \A x,p\right\rangle _{M^{2}}+\left\langle \B^{\ast
}p,c\right\rangle _{\mathbb{R}}
\end{equation}
When $\gamma >1,$ $H_{CV}\left( x,p\,;c\right) $ is not defined in the points
such that $c=x_{1}\left( 0\right) .$ In such points, since the utility is $- \infty$, we set
$H_{CV}\left( x,p\,;c\right) =-\infty $.
The maximum value of the Hamiltonian is defined by
$\mathcal{H}\left(
x,p\right) =\sup_{c\ge x_1(0)}H_{CV}\left( x,p\, ;c\right)$.
The HJB equation of the problem is then
\begin{equation}\label{eq:HJB}
\rho v\left( x\right) -\mathcal{H}\left( x,Dv\left( x\right) \right) =0
\end{equation}
where the unknown $v$ ``should'' be the value function $V_0$.
We will use the following definition of solution.
Note that it is different from the one used in the papers \cite{bambi2010,FaGo}.
\begin{definition}\label{df:solHJB}
We say that a function $v$ is a classical solution of the HJB equation (\ref{eq:HJB}) in an open set
$\mathcal{Y}\subseteq M^2$ if it is differentiable at every $x \in \mathcal{Y}$
and if satisfies  (\ref{eq:HJB}) in every point of $\mathcal{Y} \cap D(\A )$.
\end{definition}

Now we find a solution of the HJB equation. First we compute the maximum value Hamiltonian in the following lemma, whose proof is immediate.

\begin{lemma}\label{lm:maxham}
Given any $p \in M^2$  such that $\B^* p <0$ and any
$x\in D\left( \A\right)$, the function
$$H_{CV}\left( x,p\,; \cdot\right):[x_{1}\left( 0\right) ,\infty )\rightarrow
\mathbb{R}$$
admits a unique maximum point
\[
c^{\max }=
\D x+\left( -\B^{\ast }p\right) ^{-1/\gamma }.
\]
So, in this case
\[
\mathcal{H}\left( x,p\right) =\left\langle \A x,p\right\rangle _{M^{2}}
+\frac{\gamma }{1-\gamma }(-\B^{\ast }p)^{\frac{\gamma -1}{\gamma }}
+\left\langle \D x,\B^{\ast }p\right\rangle _{\R}.
\]
If, on the other hand, $\B^* p \ge0$, then
$$
\sup_{c\ge x_1(0)}H_{CV}\left( x,p\, ;c\right) =+\infty.
$$
\end{lemma}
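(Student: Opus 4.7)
The statement is a standard pointwise maximization of a strictly concave (in $c$) function over a half-line, so the plan is basically one-variable calculus carried out carefully in the two cases for the sign of $\B^*p$.

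First, I would fix $x \in D(\A)$ and $p \in M^2$ and observe that in the definition \eqref{eq:HCV} the term $\langle \A x,p\rangle_{M^2}$ does not depend on $c$; the dependence on $c$ is entirely in
\[
g(c)\;=\;\frac{(c-\D x)^{1-\gamma}}{1-\gamma}+(\B^{*}p)\,c,
\qquad c\ge \D x.
\]
Note that $\langle \B^*p,c\rangle_\R=(\B^*p)c$ since $c\in\R$, and $\D x = x_1(0)$. For $\gamma>1$ one extends $g$ by $-\infty$ at $c=\D x$, which is consistent with the convention stated just before the lemma.

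Next, assume $\B^*p<0$. Then $g$ is the sum of a strictly concave function of $c-\D x$ and a strictly decreasing linear function, hence strictly concave on $(\D x,+\infty)$. Differentiating,
\[
g'(c)=(c-\D x)^{-\gamma}+\B^{*}p,
\qquad g''(c)=-\gamma(c-\D x)^{-\gamma-1}<0,
\]
so $g'(c)=0$ has the unique solution $c^{\max}=\D x+(-\B^*p)^{-1/\gamma}$, which lies in the admissible region $(\D x,+\infty)$ and is a strict global maximum. Substituting into $g$ and simplifying
\[
\frac{(-\B^{*}p)^{(\gamma-1)/\gamma}}{1-\gamma}+(\B^{*}p)\,\D x-(-\B^{*}p)^{(\gamma-1)/\gamma}
=\frac{\gamma}{1-\gamma}(-\B^{*}p)^{(\gamma-1)/\gamma}+(\B^{*}p)\,\D x,
\]
and adding the $c$-independent term $\langle \A x,p\rangle_{M^2}$ gives the claimed formula for $\mathcal{H}(x,p)$.

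For the second assertion, assume $\B^*p\ge 0$ and let $c\to+\infty$. When $\gamma\in(0,1)$ the utility term $(c-\D x)^{1-\gamma}/(1-\gamma)$ already tends to $+\infty$, so $H_{CV}\to+\infty$ regardless of the sign of $\B^*p$. When $\gamma>1$ and $\B^*p>0$ the utility term tends to $0$ from below while $(\B^*p)c\to+\infty$, so again $H_{CV}\to+\infty$. There is no real obstacle in the proof; the only point that requires a line of care is the case distinction $\gamma\lessgtr 1$ in the last step, since the utility term is unbounded above only for $\gamma<1$. (The convention $H_{CV}=-\infty$ at $c=\D x$ for $\gamma>1$ plays no role in the sup, because the unique maximizer lies strictly above $\D x$ when $\B^*p<0$.)
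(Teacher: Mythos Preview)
Your argument is correct and is exactly the elementary one-variable maximization the paper has in mind; the paper itself gives no proof, simply declaring the lemma ``immediate.'' One tiny caveat: in the final case split you only treat $\B^*p>0$ when $\gamma>1$, and indeed for $\gamma>1$ with $\B^*p=0$ the supremum is actually $\langle \A x,p\rangle_{M^2}$ (finite, not attained), so the lemma's statement is slightly imprecise at that boundary; this is harmless for the paper, since the case $\B^*p\ge 0$ is only recorded to explain why it is excluded.
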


We now define, for $x \in M^2$,
\begin{eqnarray*}
G\left( x\right)  &=&\left( 1-\varepsilon \int_{-\tau }^{0}e^{\left(
A-\delta +\eta \right) s}ds\right) x_{0}-\int_{-\tau }^{0}e^{\left( A-\delta
\right) s}x_{1}\left( s\right) ds=\left\langle x,\kappa \right\rangle  \\
\text{ where }\kappa  &=&\left( 1-\varepsilon \int_{-\tau }^{0}e^{\left(
A-\delta +\eta \right) s}ds,s\mapsto -e^{\left( A-\delta \right)
s}\right).
\end{eqnarray*}
It is worth noting that $\kappa_0>0$
when we assume (\ref{eq:hpgrowthfinal}) i.e.
that $A-\delta>0\ge\eps-\eta$.
In fact looking at $\kappa_0$ as function of $\tau$ we see that
its derivative with respects to $\tau$ is always negative.
Since it converges to $\frac{A-\delta+\eta-\eps}{A-\delta+\eta}>0$
when $\tau\rightarrow +\infty$, it must be always positive.

We call $\mathcal{X}$ the open subset of $M^{2}$ defined by
\[
\mathcal{X}=\{x=\left( x_{0},x_{1}\left( \cdot\right) \right) \in
M^{2},\text{ }G(x)>0\}.
\]
\begin{proposition}
\label{prop:HJBv} The function $v\left( x\right) =\nu \left( G\left( x\right)
\right) ^{1-\gamma }$
with
\begin{eqnarray*}
\nu  =\frac{1}{1-\gamma}\left( \frac{\rho -\left( A-\delta \right) \left( 1-\gamma
\right) }{\gamma }\right) ^{-\gamma }
\end{eqnarray*}
is differentiable at all $x\in \mathcal{X}$ and is a
solution of the HJB equation in $\mathcal{X}$.
\end{proposition}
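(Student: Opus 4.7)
The plan is to verify directly, by pointwise substitution, that $v(x) = \nu(G(x))^{1-\gamma}$ satisfies the HJB equation (\ref{eq:HJB}) on $\mathcal{X}\cap D(\A)$, after first checking that $Dv(x)$ lies in the regime $\B^{\ast}Dv(x)<0$ where the explicit Hamiltonian formula of Lemma \ref{lm:maxham} applies. Differentiability on $\mathcal{X}$ is immediate: since $G(x)=\langle x,\kappa\rangle$ is a continuous linear functional on $M^2$ and $z\mapsto z^{1-\gamma}$ is smooth on $(0,\infty)$, the composition is Fr\'echet differentiable on $\mathcal{X}$ with
$$Dv(x) = \nu(1-\gamma)(G(x))^{-\gamma}\kappa.$$

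The two algebraic identities that make everything work concern $\kappa$ itself. First I would compute $\B^{\ast}\kappa$: using $\B^{\ast}(y_0,y_1)=-y_0+\varepsilon\int_{-\tau}^{0}e^{\eta s}y_1(s)\,ds$ applied to $\kappa=\bigl(1-\varepsilon\int_{-\tau}^{0}e^{(A-\delta+\eta)s}ds,\;s\mapsto -e^{(A-\delta)s}\bigr)$, the two $\varepsilon$-integrals cancel and one is left with $\B^{\ast}\kappa=-1$. Hence $\B^{\ast}Dv(x)=-\nu(1-\gamma)(G(x))^{-\gamma}$, which is strictly negative on $\mathcal{X}$ because $\nu(1-\gamma)=\bigl((\rho-(A-\delta)(1-\gamma))/\gamma\bigr)^{-\gamma}>0$ under (\ref{eq:Vfinite}). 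Second I would compute $\langle \A x,\kappa\rangle_{M^2}$ for $x\in D(\A)$: after integration by parts in $\int_{-\tau}^{0}(-x_1'(s))(-e^{(A-\delta)s})\,ds$, the boundary term at $s=-\tau$ vanishes because $x_1(-\tau)=0$ in $D(\A)$, the boundary term at $s=0$ produces $x_1(0)=\D x$, and the remaining integral combines with the $\R$-component to give
$$\langle \A x,\kappa\rangle_{M^2} = (A-\delta)G(x) + \D x.$$
This is the key identity: $\kappa$ has been engineered precisely so that the infinite-dimensional inner product collapses to a linear expression in $G(x)$ plus a $\D x$ correction.

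Plugging these into the Hamiltonian formula from Lemma \ref{lm:maxham} and writing $\beta := \nu(1-\gamma)G^{-\gamma}$ for brevity,
$$\mathcal{H}(x,Dv(x)) = \beta\bigl[(A-\delta)G(x)+\D x\bigr] + \tfrac{\gamma}{1-\gamma}\beta^{(\gamma-1)/\gamma} - \D x\cdot\beta,$$
where I used $(-\B^{\ast}Dv)^{(\gamma-1)/\gamma}=\beta^{(\gamma-1)/\gamma}$ and the sign in $\langle \D x,\B^{\ast}Dv\rangle_{\R}=-\D x\cdot\beta$. The $\D x$ terms cancel — this cancellation is what allows an explicit solution at all. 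After substituting into (\ref{eq:HJB}) and dividing through by $G(x)^{1-\gamma}$, one is left with the purely scalar identity
$$[\rho-(A-\delta)(1-\gamma)]\,\nu \;=\; \tfrac{\gamma}{1-\gamma}\,\bigl(\nu(1-\gamma)\bigr)^{(\gamma-1)/\gamma},$$
which a short computation solves to recover exactly the $\nu$ given in the statement.

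The only real obstacles are the two identities $\B^{\ast}\kappa=-1$ and $\langle \A x,\kappa\rangle_{M^2}=(A-\delta)G(x)+\D x$; these are essentially book-keeping, but they are what justifies the specific form of $\kappa$ and, in particular, the coefficient $1-\varepsilon\int_{-\tau}^{0}e^{(A-\delta+\eta)s}ds$ appearing in $\kappa_0$. Once these are in hand, the scalar verification of the coefficient $\nu$ and the check that $\B^{\ast}Dv<0$ on $\mathcal{X}$ (so that Lemma \ref{lm:maxham} really applies) are short.
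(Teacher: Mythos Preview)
Your proposal is correct and follows essentially the same route as the paper: compute $Dv(x)=(1-\gamma)\nu G(x)^{-\gamma}\kappa$, establish the two identities $\B^{\ast}\kappa=-1$ and $\langle \A x,\kappa\rangle_{M^2}=(A-\delta)G(x)+x_1(0)$ (the latter via integration by parts using $x_1(-\tau)=0$ on $D(\A)$), observe the cancellation of the $\D x$ terms in the Hamiltonian, and reduce to the scalar equation for $\nu$. Your write-up is in fact slightly more careful than the paper's in explicitly verifying $\B^{\ast}Dv(x)<0$ on $\mathcal{X}$ before invoking Lemma \ref{lm:maxham}.
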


\begin{proof}
Let $v\left( x\right) =\nu \left( G\left( x\right) \right)^{1-\gamma }$
for every $x \in M^2$.
%

%


Then
\[
Dv\left( x\right) =\left( 1-\gamma \right) \nu G\left( x\right)
^{-\gamma}\kappa
\]
Since $\B^* \kappa=-1$ we have
\begin{eqnarray*}
\mathcal{B}^{\ast }Dv\left( x\right) &=&\left( 1-\gamma \right) \nu G\left(
x\right) ^{-\gamma } \mathcal{B}^{\ast }\kappa
= - \left( 1-\gamma \right) \nu G\left(
x\right) ^{-\gamma }
\\
\left\langle \D x,\B^{\ast }Dv(x)\right\rangle _{\R}&=&
- x_1(0)
\left( 1-\gamma \right) \nu G\left(x\right) ^{-\gamma }
\end{eqnarray*}%
Now for $x\in D\left( \mathcal{A}\right)$ we have
$$
\left\langle \mathcal{A}x,Dv\left( x\right) \right\rangle _{M^{2}}
=\left(
1-\gamma \right) \nu G\left( x\right) ^{-\gamma }
\left\langle \mathcal{A}x,\kappa \right\rangle _{M^{2}}.
$$
Moreover by definition of $\A$ and $\kappa$ we have
(integrating by parts and using that $x(-\tau)=0$ since
$x \in  D\left( \mathcal{A}\right) $)
\begin{eqnarray*}
\left\langle \mathcal{A}x,\kappa \right\rangle _{M^{2}} &=&
\left( A-\delta \right) \left( 1-\varepsilon \int_{-\tau }^{0}e^{\left(
A-\delta +\eta \right) s}ds\right) x_{0}
+\int_{-\tau }^{0}x_{1}^{\prime }\left( s\right) e^{\left( A-\delta \right)
s}ds
 \\
&=&
\left( A-\delta \right) \left( 1-\varepsilon \int_{-\tau }^{0}e^{\left(
A-\delta +\eta \right) s}ds\right) x_{0}
+x_{1}\left( 0\right) -\left( A-\delta \right) \int_{-\tau }^{0}x_{1}\left(
s\right) e^{\left( A-\delta \right) s}ds
\\[2mm]
&=&
\left( A-\delta \right)\left\langle x,\kappa \right\rangle _{M^{2}}
+x_{1}\left( 0\right).
\end{eqnarray*}
It follows that
$$
\mathcal{H}(x,Dv(x))=
\left(
1-\gamma \right) \nu G\left( x\right) ^{-\gamma }\left[
\left\langle \mathcal{A}x,\kappa \right\rangle _{M^{2}}
- x_1(0)\right]
+\frac{\gamma}{1-\gamma}[(1-\gamma)\nu]^{\frac{\gamma-1}{\gamma}}
G(x)^{1-\gamma}
$$
$$
=(A-\delta)\left(1-\gamma \right) \nu G\left( x\right) ^{1-\gamma }
+\frac{\gamma}{1-\gamma}[(1-\gamma)\nu]^{\frac{\gamma-1}{\gamma}}
G(x)^{1-\gamma}
=
$$
$$
=
\nu  G(x)^{1-\gamma}
\left[(A-\delta)(1-\gamma) + \gamma [(1-\gamma)\nu]^{-\frac{1}{\gamma}}\right]
$$
We can now substitute all the above in the HJB equation getting
$$
\rho v\left( x\right) -\mathcal{H}(x,Dv(x))=
$$
$$
=\nu G\left( x\right)^{1-\gamma}
\left[ \rho -(A-\delta)(1-\gamma) - \gamma [(1-\gamma)\nu]^{-\frac{1}{\gamma}}\right]
$$
and the claim follows by the definition of $\nu$.
\end{proof}

\medskip

The optimal feedback policy associated to the above solution
of the HJB equation (\ref{eq:HJB}) is easily found by
Lemma \ref{lm:maxham} and is
\begin{equation}\label{eq:feedback0}
\varphi(x)=x_{1}\left( 0\right) +\alpha G\left( x\right) ,\text{ for }
x\in \mathcal{X}
    \end{equation}
where $\alpha = \frac{\rho -\left( A-\delta \right) \left( 1-\gamma
\right) }{\gamma }$. Observe that $\alpha >0$ thanks to
assumption (\ref{eq:Vfinite}).


\subsection{Closed loop policy}

We need to determine a set of admissible initial data included in $\mathcal{X}$ such that the candidate optimal feedback $\varphi$  given in (\ref{eq:feedback0})
is really optimal. For any $x$ in this set we will have that
$v(x)=V_0(x)$.

We call $C\left( M^{2}\right) $ the set of continuous functions from $M^{2}$
to $\mathbb{R}$. As in Bambi et al. \cite{bambi2010}, we give definitions concerning
feedback strategies.

\begin{definition}
Given an initial condition $q\in M^{2},$ we call $\psi \in C\left(
M^{2}\right) $ a feedback strategy related to $q$ if the equation
\begin{equation}
\left\{
\begin{array}{c}
\frac{dX\left( t\right) }{dt}=\A X\left( t\right) +\B\left( \psi \left(
X\left( t\right) \right) \right) \\
X\left( 0\right) =q
\end{array}
\right.  \label{eq:eqFBS}
\end{equation}
has a unique solution $X_{\psi }\left( t\right) $ in $\Pi =\left\{ f\in
C\left( [0,\infty ),M^{2}\right) ,\frac{df}{dt}\in L_{loc}^{2}\left(
[0,\infty ),D\left( \A\right) ^{\prime }\right) \right\} .$ The set of
feedback strategies related to $q$ is denoted $FS_{q}.$
\end{definition}

\begin{definition}
Given an initial condition $q\in M^{2},$ and $\psi \in FS_{q},$ we say
that $\psi $ is an admissible strategy if the unique solution $X_{\psi
}\left( t\right) $ of (\ref{eq:eqFBS}) satisfies $\psi \left( X_{\psi
}\left( \cdot\right) \right) \in \C_{ad}\left( q\right)  $. We denote $AFS_{q}$ the
set of admissible feedback strategies related to $q$.
\end{definition}

\begin{definition}
We say that $\psi $ is an optimal feedback strategy related to $q$ if
\[
V\left( q\right) =\int_{0}^{\infty }
\frac{(\psi \left( X_{\psi}(t)\right)
-\D X_\psi (t))^{1-\gamma }}{1-\gamma }e^{-\rho t}dt
\]
We denote $OFS_{q}$ the set of optimal feedback strategies related to $q$.
\end{definition}

\bigskip

We first prove that our candidate is always in $FS_{q}$.

\begin{lemma}\label{lm:CLE}
For every $q\in M^{2},$ the map
\begin{eqnarray*}
\varphi  : M^{2}\rightarrow
\mathbb{R}, \qquad
\varphi \left( x\right)  = x_{1}\left( 0\right) +\alpha G\left( x\right) ,
\end{eqnarray*}
is in $FS_{q}$.
\end{lemma}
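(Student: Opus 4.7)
The strategy is to recast the closed-loop equation (\ref{eq:eqFBS}) with $\psi=\varphi$ as a scalar linear Volterra integral equation for the auxiliary unknown $c(t):=\varphi(X(t))$, and then to apply standard Volterra theory to produce a unique $L^{1}_{loc}$ solution, from which the desired $X\in\Pi$ is reconstructed.

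First, I would use the existence result already established for the open-loop state equation: for every $q\in M^{2}$ and every $c(\cdot)\in L^{1}_{loc}([0,+\infty);\R)$ there is a unique mild solution $X(\cdot;q,c)\in\Pi$ of $\dot X=\A X+\B c$ with $X(0)=q$. Its first component solves the scalar linear ODE $\dot X_{0}=(A-\delta)X_{0}-c$, while its second component is obtained by solving the transport equation $\partial_{t}X_{1}=-\partial_{s}X_{1}+\varepsilon e^{\eta s}c(t)$ with the boundary condition $X_{1}(t)[-\tau]=0$ via the method of characteristics. This gives explicit (piecewise) formulas which show that both the trace $\D X(t;q,c)=X_{1}(t)[0]$ and the linear functional $G(X(t;q,c))$ depend on $c$ only through convolution-type integrals over $[0,t]$, plus a free-motion term depending on $q$. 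Substituting these expressions into the identity $c(t)=\D X(t)+\alpha G(X(t))$ transforms the closed-loop equation into a linear Volterra integral equation
\[
c(t)=F_{q}(t)+\int_{0}^{t}K(t,r)\,c(r)\,dr,\qquad t\ge 0,
\]
where $F_{q}\in L^{1}_{loc}([0,+\infty);\R)$ is built from $q$ and the kernel $K(t,r)$ is locally bounded on $\{0\le r\le t\}$.

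Standard Volterra theory (or equivalently the method of steps on the intervals $[n\tau,(n+1)\tau]$) then delivers a unique $c\in L^{1}_{loc}([0,+\infty);\R)$ solving the integral equation. Setting $X:=X(\cdot;q,c)$ yields the required solution in $\Pi$, and uniqueness is obtained by reversing the construction: any other $Y\in\Pi$ solving (\ref{eq:eqFBS}) with $\psi=\varphi$ defines $c_{Y}(t):=\varphi(Y(t))$, which, by the openloop existence theorem, forces $Y=X(\cdot;q,c_{Y})$ and makes $c_{Y}$ satisfy the very same Volterra equation, so $c_{Y}\equiv c$ and $Y\equiv X$. The main obstacle I anticipate is the bookkeeping in writing $F_{q}$ and $K$ explicitly: the formulas for $X_{1}(t)[s]$ are piecewise (information is carried from $q_{1}$ as long as $s-t\ge -\tau$ and from the boundary value $X_{1}[-\tau]=0$ once $s-t<-\tau$), so care is needed when unfolding them into the expression for $K$; however, the local boundedness of $K$, which is all that is needed to apply the Volterra fixed-point argument, follows immediately from the uniform control of $e^{\eta u}$ and $e^{(A-\delta)s}$ on compact sets.
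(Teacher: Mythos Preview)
Your approach is correct and takes a genuinely different route from the paper's. The paper rewrites the closed-loop equation back in the delay setting as a coupled functional system in $(\tilde c,\tilde k)$ and invokes an external existence/uniqueness theorem (d'Albis et al.) to solve it, essentially by the method of steps; it then rebuilds the structural state $\tilde x$ from $(\tilde c,\tilde k)$. As written, that argument only covers initial data of the special form $q=(k_0,\tilde\gamma(0))$ coming from a past history $c_0(\cdot)$; the extension to arbitrary $q\in M^2$ is not carried out but deferred to another reference. Your Volterra-equation reformulation works directly on the abstract equation and handles arbitrary $q\in M^2$ in one stroke: once the scalar equation for $c$ is solved, the open-loop theorem manufactures $X\in\Pi$, and uniqueness follows by reversing the construction. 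This is cleaner and more self-contained.

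One technical point you should make explicit: for general $q_1\in L^2([-\tau,0])$ the trace formula $X_1(t)[0]=q_1(-t)+\varepsilon\int_0^t e^{\eta(r-t)}c(r)\,dr$ (valid for $t\in[0,\tau]$) defines $\D X(t)$ only almost everywhere in $t$, so $F_q\in L^2_{loc}\subset L^1_{loc}$ rather than continuous. This is harmless for the Volterra fixed-point argument, but it means the closed-loop control $c=\varphi(X(\cdot))$ is an $L^1_{loc}$ object rather than a continuous one, and that $\varphi$ is not literally an element of $C(M^2)$ as the definition of $FS_q$ nominally requires. This wrinkle is inherent in the paper's framework (the operator $\D$ is only defined on $\R\times C([-\tau,0];\R)$) and is not a defect of your argument, but flagging it would strengthen the write-up.
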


\begin{proof}
We have to prove that
\begin{equation}
\left\{
\begin{array}{c}
\frac{dX\left( t\right) }{dt}=\mathcal{A}X\left( t\right) +\mathcal{B}\left(
\varphi \left( X\left( t\right) \right) \right)  \\
X\left( 0\right) =q%
\end{array}%
\right.   \label{eq:CLEphi}
\end{equation}
has a unique solution in $\Pi $.
%
We first consider the following functional equation in
$\widetilde{c}$ and $\widetilde{k}$
\[
\left\{
\begin{array}{l}
\widetilde{c}\left( t\right) =\varepsilon \int_{-\tau }^{0}\widetilde{c}%
\left( t+u\right) e^{\eta u}du \\
\qquad \qquad +\alpha \left[ \left( 1-\varepsilon \int_{-\tau }^{0}e^{\left( A-\delta
+\eta \right) s}ds\right) \widetilde{k}\left( t\right) +\int_{-\tau
}^{0}e^{\left( A-\delta \right) s}\varepsilon \int_{-\tau }^{s}\widetilde{c}%
\left( t+u-s\right) e^{\eta u}du\text{ }ds\right]  \\
{\dot{\widetilde{k}}}\left( t\right) =\left( A-\delta \right)
\widetilde{k}\left( t\right) -\widetilde{c}\left( t\right)  \\
\widetilde{c}\left( s\right) =c\left( s\right) \text{ for }s\in \lbrack
-\tau ,0) \\
c\left( 0\right) =\varepsilon \int_{-\tau }^{0}\widetilde{c}\left(
u-s\right) e^{\eta u}du
\\
\qquad \qquad+\alpha \left[ \left( 1-\varepsilon \int_{-\tau
}^{0}e^{\left( A-\delta +\eta \right) s}ds\right) k\left( 0\right)
+\int_{-\tau }^{0}e^{\left( A-\delta \right) s}\varepsilon \int_{-\tau
}^{s}c\left( u-s\right) e^{\eta u}du\text{ }ds\right] >0 \\
\widetilde{k}\left( 0\right) =k\left( 0\right)
\end{array}%
\right.
\]
This system has a unique continuous solution $\left( \widetilde{c},%
\widetilde{k}\right) $ on $[0,\infty )$ (d'Albis et al., \cite{hupkes2012}).
Denoting $\widetilde{x}=\left( \widetilde{k},\widetilde{\gamma }\left(
t\right) \right) $ where $\widetilde{\gamma }\left( t\right) [s]=\varepsilon
\int_{-\tau }^{s}\widetilde{c}\left( t+u-s\right) e^{\eta u}du,$ then $%
\widetilde{x}$ satisfies
\[
\left\{
\begin{array}{l}
\frac{d\widetilde{x}\left( t\right) }{dt}=\mathcal{A}\widetilde{x}\left(
t\right) +\mathcal{B}\widetilde{c}\left( t\right)  \\
\widetilde{x}\left( 0\right) =\left( k_{0},\widetilde{\gamma }\left(
0\right) \right)
\end{array}%
\right.
\]
which has a unique solution using e.g. Bensoussan et al.
(\cite{bensousan92}, Theorem 5.1, p.282). Notice that $\widetilde{c}\left(
t\right) =\varphi \left( \widetilde{x}\left( t\right) \right)$.

In this way we have proved existence and uniqueness when the
initial datum is of the form
$\left( k_{0},\widetilde{\gamma }\left(0\right) \right)$.
To get the result for every initial datum $q \in M^2$ we need to
set the equation in the space $D(\A)'$ and then show that the solution is
indeed continuous with values in $M^2$. This can be done exactly as in
\cite{FagGozJME}, Section 5-6. We do not do it for brevity and also since,
to solve our starting problem ({\bf P}) it is enough to deal with
the narrower set of data used here.
\end{proof}

\medskip

Now we want to prove the optimality of $\varphi$.
This is very difficult to prove (and in general not true)
without additional assumptions.
So we will prove the optimality of $\varphi$ when
(\ref{eq:hpgrowthfinal}) holds and the initial datum $q$ belongs to a given set $I \subset {\mathcal X}$ which includes the data we are interested in. We start by proving a useful invariance property for the trajectory associated to $\varphi$.
\begin{proposition}
\label{pr:Xinvariant}
For every initial datum $q \in M^2$
the solution $X_{\varphi }\left( \cdot\right) $ of (\ref{eq:CLEphi}) satisfies
\[
G\left( X_{\varphi }\left( t\right) \right)
=G\left( q\right)  e^{^{\Gamma t}}\text{ for all }t\geq 0
\]
where $\Gamma =\frac{1}{\gamma }(A-\delta -\rho )$.
\end{proposition}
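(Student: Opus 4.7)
The strategy is to compute directly the time derivative of $t \mapsto G(X_\varphi(t))$ along the closed loop trajectory, identify a linear ODE of the form $\dot y = \Gamma y$, and integrate. Since $G(x)=\langle x,\kappa\rangle_{M^2}$ is linear, differentiating formally along the equation (\ref{eq:CLEphi}) gives
\[
\frac{d}{dt}G(X_\varphi(t))=\langle \mathcal{A}X_\varphi(t),\kappa\rangle_{M^2}+\langle \mathcal{B}\varphi(X_\varphi(t)),\kappa\rangle_{M^2}.
\]

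The two inner products on the right are exactly the ones already computed inside the proof of Proposition \ref{prop:HJBv}. First, the integration by parts done there (using the boundary condition $x_1(-\tau)=0$ defining $D(\mathcal{A})$) yields, for every $x\in D(\mathcal{A})$,
\[
\langle \mathcal{A}x,\kappa\rangle_{M^2}=(A-\delta)G(x)+x_1(0).
\]
Second, the direct computation of the adjoint $\mathcal{B}^{\ast}\kappa=-1$ (the two $\varepsilon$-integrals telescope) gives $\langle \mathcal{B}c,\kappa\rangle_{M^2}=-c$. Plugging in the feedback $\varphi(X_\varphi(t))=X_{\varphi,1}(t)[0]+\alpha G(X_\varphi(t))$, the two $X_{\varphi,1}(t)[0]$ contributions cancel and we are left with
\[
\frac{d}{dt}G(X_\varphi(t))=\bigl(A-\delta-\alpha\bigr)G(X_\varphi(t)).
\]
A one-line algebraic check using $\alpha=\tfrac{\rho-(A-\delta)(1-\gamma)}{\gamma}$ shows $A-\delta-\alpha=\tfrac{1}{\gamma}(A-\delta-\rho)=\Gamma$, and integrating the resulting linear ODE delivers $G(X_\varphi(t))=G(q)e^{\Gamma t}$.

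The only nontrivial point is regularity: the identity for $\langle \mathcal{A}x,\kappa\rangle_{M^2}$ is proved via integration by parts and therefore holds pointwise only on $D(\mathcal{A})$, whereas for a generic $q\in M^2$ the solution $X_\varphi(\cdot)$ constructed in Lemma \ref{lm:CLE} is only a mild (distributional) solution of (\ref{eq:CLEphi}). For initial data of the form $q=(k_0,s\mapsto \varepsilon\int_{-\tau}^s c_0(u-s)e^{\eta u}du)$ with continuous $c_0$ — which is precisely the class reduced to in the proof of Lemma \ref{lm:CLE} via the system for $(\tilde c,\tilde k)$ — the second component of $q$ already vanishes at $s=-\tau$, the pair $(\tilde c,\tilde k)$ is continuous, and $X_\varphi(t)\in D(\mathcal{A})$ for all $t\geq 0$, so the computation above is fully justified pointwise. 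For a general $q\in M^2$ one approximates by a sequence $q_n\in D(\mathcal{A})$ with the required past-boundary condition, applies the formula $G(X_{\varphi,q_n}(t))=G(q_n)e^{\Gamma t}$, and passes to the limit using continuity of $G$ (which is a bounded linear functional on $M^2$) together with the continuous dependence of the mild solution on the initial datum established in the last paragraph of the proof of Lemma \ref{lm:CLE}. The main obstacle is thus not the computation itself, but verifying that the trajectory has enough regularity to make the integration-by-parts step rigorous; once the class of admissible initial data is pinned down as in Lemma \ref{lm:CLE}, everything else is a short explicit calculation.
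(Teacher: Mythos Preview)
Your proof is correct and follows essentially the same route as the paper: differentiate $G(X_\varphi(t))=\langle X_\varphi(t),\kappa\rangle$, compute $\langle \mathcal{A}x,\kappa\rangle=(A-\delta)G(x)+x_1(0)$ via integration by parts and $\langle \mathcal{B}c,\kappa\rangle=-c$ via $\mathcal{B}^\ast\kappa=-1$, cancel the $x_1(0)$ terms, and integrate the resulting linear ODE. Your added discussion of the regularity issue (whether $X_\varphi(t)\in D(\mathcal{A})$ so that the integration by parts is legitimate) is more explicit than the paper's proof, which simply performs the computation; the paper addresses this point only later, in the proof of Proposition~\ref{pr:Ioptimal}, for data in $I$.
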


\begin{proof}
It is enough to compute $\frac{d}{dt}G\left( X_{\varphi }\left( t\right) \right) $. Indeed we have
\begin{eqnarray*}
\frac{d}{dt}G\left( X_{\varphi }\left( t\right) \right)  &=&\frac{d}{dt}%
\left\langle X_{\varphi }\left( t\right) ,\kappa \right\rangle  \\[2mm]
&=&\left\langle \A X_{\varphi }\left( t\right) +\B \varphi \left( X_{\varphi
}\left( t\right) \right) ,\kappa \right\rangle
\end{eqnarray*}
Now we cannot, as done in other papers (see e.g. \cite{bambi2010,FaGo}, write
$$\left\langle \A X_{\varphi }\left( t\right) ,\kappa \right\rangle
=\left\langle X_{\varphi }\left( t\right) ,\A^{\ast }\kappa \right\rangle
$$
So we have to compute such term directly. Since
$$\A\left( x_{0},x_{1}\left( \cdot\right) \right) = \left( \left( A-\delta
\right) x_{0},s\mapsto -\frac{dx_{1}\left( s\right) }{ds}\right)
$$
and
$$
\kappa  =\left( 1-\varepsilon \int_{-\tau }^{0}e^{\left(
A-\delta +\eta \right) s}ds,s\mapsto -e^{\left( A-\delta \right)s}\right),
$$
then, integrating by parts as in the proof of Proposition \ref{prop:HJBv},
$$
\left\langle \A X_{\varphi }\left( t\right) ,\kappa \right\rangle
= \left( A-\delta\right)X_{\varphi,0}(t)  \left( 1-\varepsilon \int_{-\tau }^{0}e^{\left(
A-\delta +\eta \right) s}ds\right)+ \int_{-\tau}^0 \frac{dX_{\varphi,1}(t)[s]}{ds}
e^{\left( A-\delta \right)s}ds
$$
$$
=X_{\varphi,1}(t)[0] + \left( A-\delta\right)\left\langle X_{\varphi }\left( t\right) ,\kappa \right\rangle
=X_{\varphi,1}(t)[0] + \left( A-\delta\right)
G(X_{\varphi }\left( t\right))
$$
Moreover, since $\B c=c\left( -1,s\mapsto \varepsilon e^{\eta s}\right)$,
and
$$
\varphi ( X_{\varphi}(t))=
 X_{\varphi, 1}(t)\left[ 0\right] +\alpha G\left(
X_{\varphi }\left( t\right) \right)
$$
then
$$
\left\langle \B\varphi\left( X_{\varphi}(t)\right) ,\kappa \right\rangle
=
\left\langle \B\left( X_{\varphi, 1}(t)\left[ 0\right] +\alpha G\left(
X_{\varphi }\left( t\right) \right) \right) ,\kappa \right\rangle
=
$$
$$
=
\left( X_{\varphi, 1}(t)\left[ 0\right]
+\alpha G\left(X_{\varphi }\left( t\right) \right)
\right)
\left(-1+\varepsilon \int_{-\tau }^{0}e^{\left(
A-\delta +\eta \right) s}ds - \varepsilon\int_{-\tau}^0  e^{(A - \delta+\eta) s}ds
\right)
$$
$$
=- X_{\varphi, 1}(t)\left[ 0\right]
-\alpha G\left(X_{\varphi }\left( t\right) \right)
$$
Hence, summing up, we get
$$
\frac{d}{dt}G\left( X_{\varphi }\left( t\right) \right)
=\left( A-\delta -\alpha \right) G\left( X_{\varphi }\left( t\right)
\right).
$$
Using that
\[
A-\delta -\alpha =\Gamma
\]
the claim follows.
\end{proof}

\medskip

Now we define the set $I$ and state a key invariance property of it.

\begin{proposition}\label{pr:Iadmissible}
The set $I$ defined as
\[
I=\mathcal{X}\cap \left\{
\begin{array}{c}
q=\left( x_{0},x_{1}\right) \in \R \times W^{1,2}([-\tau,0];\R) \subset M^{2},
\\
x_{1}\left( s\right) >0 \text{ for  all }s\in [ -\tau ,0],
\end{array}
\right\}
\]%
is invariant for the flow of the autonomous ODE
\[
\frac{dX\left( t\right) }{dt}=\mathcal{A}X\left( t\right) +\mathcal{B}\left(
\varphi \left( X\left( t\right) \right) \right).
\]%
Hence, if (\ref{eq:hpgrowthfinal}) holds, then
for any $q\in I$ we have $\varphi \in AFS_{q}.$
\end{proposition}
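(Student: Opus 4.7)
The plan is to deduce both the invariance of $I$ and the admissibility of $\varphi$ from the $G$--invariance established in Proposition \ref{pr:Xinvariant}. Since $G(X_\varphi(t))=G(q)\,e^{\Gamma t}>0$ for any $q\in I\subset \mathcal{X}$, the first defining condition of $\mathcal{X}$ is automatically preserved for all $t\ge 0$. The formula $\varphi(x)=x_1(0)+\alpha G(x)$ with $\alpha>0$ (from (\ref{eq:Vfinite})) then yields
\[
\varphi(X_\varphi(t))-X_{\varphi,1}(t)[0]=\alpha\,G(q)\,e^{\Gamma t}>0,
\]
so the subtractive constraint (\ref{eq:constraints2}) is satisfied. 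The remaining admissibility requirements (nonnegativity of the control and of the capital) both reduce to positivity of the habit component $X_{\varphi,1}(t)[s]$.

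For this I would use the integral representation
\[
X_{\varphi,1}(t)[s]=\varepsilon\int_{-\tau}^s \tilde c(t+u-s)\,e^{\eta u}\,du,
\]
reducing positivity of $X_{\varphi,1}(t)[\cdot]$ to positivity of the extended control $\tilde c$ on windows of length $\tau$. On $[-\tau,0)$ we have $\tilde c=c_0$, which is positive by the defining condition $x_1(s)>0$ of $I$. On $[0,+\infty)$ the closed--loop equation gives
\[
\tilde c(t)=\varphi(X_\varphi(t))=X_{\varphi,1}(t)[0]+\alpha\,G(q)\,e^{\Gamma t},
\]
which is bounded below by the strictly positive exponential forcing $\alpha G(q)e^{\Gamma t}$ whenever the habit is nonnegative. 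A step--by--step continuation on the successive intervals $[0,\tau]$, $(\tau,2\tau]$, and so on, in the spirit of the induction used in the proof of Proposition \ref{pr:estimateck}, then propagates strict positivity of $\tilde c$, and hence of $X_{\varphi,1}(t)[s]$ for $s\in(-\tau,0]$, to all $t\ge 0$.

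Once the habit is known to be nonnegative, the control satisfies $\varphi(X_\varphi(t))\ge 0$ trivially. For the capital I would rewrite the definition of $G$ as
\[
\kappa_0\,X_{\varphi,0}(t)=G(X_\varphi(t))+\int_{-\tau}^0 e^{(A-\delta)s}\,X_{\varphi,1}(t)[s]\,ds,
\]
and invoke the already--verified inequality $\kappa_0>0$ under (\ref{eq:hpgrowthfinal}), together with the positivity of both summands on the right, to conclude $X_{\varphi,0}(t)>0$. Combined with the first two steps this establishes simultaneous invariance of $I$ and membership of $\varphi$ in $AFS_q$ for every $q\in I$.

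The main obstacle is the continuation argument for the habit: the feedback value at time $t$ feeds back into the habit window $[t-\tau,t]$, so a naive sign argument risks circularity. The decisive point is that the additive term $\alpha G(q)e^{\Gamma t}$ provides a strictly positive exponential forcing that is independent of the habit and cannot be cancelled by it, which breaks the potential feedback loop and allows the induction to close on intervals of length $\tau$. A minor technical remark is the degeneracy $X_{\varphi,1}(t)[-\tau]=0$ at the left endpoint; this does not affect the argument since strict positivity is only required on $(-\tau,0]$ for the integral formula to drive the bootstrap.
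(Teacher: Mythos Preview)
Your proposal follows the same route as the paper: use the $G$--invariance of Proposition~\ref{pr:Xinvariant} to keep the trajectory in $\mathcal{X}$, run a continuation argument for positivity of the habit component, and then read off $X_{\varphi,0}>0$ from $\kappa_0>0$ under (\ref{eq:hpgrowthfinal}). The paper packages the continuation as a ``supremum of good times'' contradiction rather than your interval-by-interval induction, but the two devices are interchangeable, and your treatment of the capital sign is identical to the paper's.

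One step needs tightening. You invoke the representation $X_{\varphi,1}(t)[s]=\varepsilon\int_{-\tau}^s \tilde c(t+u-s)e^{\eta u}du$ and then assert that $\tilde c=c_0>0$ on $[-\tau,0)$ ``by the defining condition $x_1(s)>0$ of $I$''. This does not follow: a generic $q=(x_0,x_1)\in I$ need not be a structural state at all (so there is no $c_0$ to speak of), and even when it is, positivity of the weighted integrals $x_1(s)$ does not force $c_0>0$ pointwise. The paper avoids the issue by solving the transport equation directly, obtaining for $t-s-\tau<0$
\[
X_{\varphi,1}(t)[s]=x_1(s-t)+\varepsilon e^{\eta(s-t)}\int_0^t \bar c(u)\,e^{\eta u}\,du,
\]
so the initial-layer positivity comes from $x_1>0$ itself, not from any reconstructed $c_0$. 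If you swap in this formula, your bootstrap on $[0,\tau],(\tau,2\tau],\ldots$ goes through unchanged. Your closing remark about the endpoint degeneracy $X_{\varphi,1}(t)[-\tau]=0$ is well taken; the paper glosses over the same point.
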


\begin{proof}
Let $q=(x_0,x_1(\cdot))\in I$.
We show that the associated solution $X_\varphi (t)$
of (\ref{eq:CLEphi}) still belongs to $I$ for every $t>0$.
Since we already know, by Proposition \ref{pr:Xinvariant},
that we always have $G(X_\varphi(t))>0$, it is enough to prove that, for every $t>0$,
$X_{\varphi,1}(t)[0]>0$ and $X_{\varphi,1}(t)[s] >0$
for almost all $s\in \lbrack -\tau ,0)$.


Let now $t_0\ge 0$ be the supremum of all times $t$ such that the above remain true.
We show the $t_0=+\infty$.
First of all observe that, by using the definition of
structural state, we have, for $t\ge 0$ and $s \in [-\tau,0]$,
\begin{equation}\label{eq:Xphinew}
X_{\varphi ,1}\left( t\right) \left[ s\right] =
\left\{
\begin{array}{ll}
x_1 (s-t)
+\varepsilon e^{\eta \left( s-t\right)}
\int_{0}^{t}\bar c\left( u\right) e^{\eta u}du,
& \hbox{ if }t-s-\tau <0,
\\
\\
\varepsilon e^{\eta \left( s-t\right)}
\int_{t-s-\tau }^{t}\bar c\left( u\right) e^{\eta u}du,
&\hbox{ if }t-s-\tau \ge 0,
\end{array}
\right.
\end{equation}
where
\[
\bar c\left( u\right) =X_{\varphi,1}(u)\left[ 0\right] +\alpha G\left(
X_\varphi(u)\right) \quad for \quad 0\le u< t.
\]
Since $G(X_\varphi(t))>0$ for every $t\ge 0$,
from the above is clear that, for small $t>0$ and for every $s\in[-\tau,0]$
it must be  $X_{\varphi,1}(u)\left[ s\right]>0$.
So it must be $t_0>0$.

Now assume by contradiction the $t_0$ is finite.
The we have
\[
\bar c\left( u\right) =X_{\varphi,1}(u)\left[ 0\right] +\alpha G\left(
X_\varphi(u)\right)>0 \quad for \quad 0\le u< t_0.
\]
So according to (\ref{eq:Xphinew}) it must be
$$X_{\varphi,1}\left( t_{0}\right)\left[ s\right]
>0
$$
for every $s\in[-\tau,0]$. This give the contradiction showing
the invariance of $I$ since clearly the $W^{1,2} $ regularity in $s$ preserves
due to (\ref{eq:Xphinew}).

Finally we observe that, if (\ref{eq:hpgrowthfinal}) holds, then
$X_{\varphi ,0}\left( t\right) >0$ for all $t\ge 0$.
Indeed, since
$G\left( X_{\varphi }(t)\right) >0$
we must have
$$\left(1-\varepsilon \int_{-\tau }^{0}e^{\left( A-\delta +\eta \right) s}ds\right)
X_{\varphi ,0}\left( t\right) >\int_{-\tau }^{0}e^{\left( A-\delta
\right) s}X_{\varphi ,1}(t)[s]ds\geq 0.$$
Recalling that assumption (\ref{eq:hpgrowthfinal}) implies that
$1-\varepsilon \int_{-\tau }^{0}e^{\left( A-\delta +\eta \right) s}ds>0$
(see the discussion before Proposition \ref{prop:HJBv})
we immediately get $X_{\varphi ,0}\left( t_{0}\right)
>0$.
Thus $\varphi \in AFS_{p}.$
\end{proof}


\bigskip

It now remains to prove that $\varphi \in OFS_{p}$.

\begin{proposition}\label{pr:Ioptimal}
If $q\in I$ and (\ref{eq:hpgrowthfinal}) holds, then $\varphi \in OFS_{q}$.
The associated state-control couple is the
unique optimal couple of the problem.
\end{proposition}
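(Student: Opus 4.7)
The plan is to apply the classical verification theorem of dynamic programming, exploiting the $C^{1}$ solution $v(x)=\nu G(x)^{1-\gamma}$ of the HJB equation from Proposition \ref{prop:HJBv} together with the fact, proved in Proposition \ref{pr:Iadmissible}, that the candidate feedback $\varphi$ lies in $AFS_q$ for every $q\in I$. The goal is to show that $v(q)=V_0(q)$ on $I$, with $\varphi$ attaining the supremum; uniqueness of the optimal couple will then follow from the strict concavity of $c\mapsto (c-h)^{1-\gamma}/(1-\gamma)$ combined with the convexity of $\mathcal{C}_{ad}(q)$.

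First I would derive a fundamental identity. Fix $q\in I$ and any $c(\cdot)\in\mathcal{C}_{ad}(q)$ with associated structural state $X(\cdot)$. Differentiating $t\mapsto e^{-\rho t}v(X(t))$ along (\ref{eq:ss1}) and integrating on $[0,T]$ formally gives
\[
v(q)= e^{-\rho T}v(X(T)) + \int_0^T e^{-\rho t}\bigl[\rho v(X(t))-\langle\mathcal{A}X(t)+\mathcal{B}c(t),Dv(X(t))\rangle\bigr]dt.
\]
Because $Dv(x)=(1-\gamma)\nu G(x)^{-\gamma}\kappa$ is parallel to $\kappa$, the pairing $\langle\mathcal{A}X(t),Dv(X(t))\rangle$ reduces to the scalar quantity $\langle\mathcal{A}X(t),\kappa\rangle$, already computed by explicit integration by parts in the proof of Proposition \ref{pr:Xinvariant}. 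Then I would apply the HJB inequality implied by Proposition \ref{prop:HJBv} and Lemma \ref{lm:maxham}: for every $c\geq\mathcal{D}x$,
\[
\rho v(x)\geq \frac{(c-\mathcal{D}x)^{1-\gamma}}{1-\gamma}+\langle\mathcal{A}x+\mathcal{B}c,Dv(x)\rangle,
\]
with equality if and only if $c=\varphi(x)$. Substituting into the fundamental identity yields
\[
v(q)\geq \int_0^T e^{-\rho t}\frac{(c(t)-\mathcal{D}X(t))^{1-\gamma}}{1-\gamma}\,dt + e^{-\rho T}v(X(T)),
\]
with equality along the trajectory $X_\varphi$ generated by $\varphi$.

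The transversality step comes next. Along the candidate optimal trajectory, Proposition \ref{pr:Xinvariant} gives $G(X_\varphi(t))=G(q)e^{\Gamma t}$ with $\Gamma=(A-\delta-\rho)/\gamma$, so
\[
e^{-\rho T}v(X_\varphi(T))=\nu G(q)^{1-\gamma}\exp\Bigl(-\frac{\rho-(1-\gamma)(A-\delta)}{\gamma}T\Bigr),
\]
which tends to $0$ as $T\to\infty$ by (\ref{eq:Vfinite}). For a generic admissible trajectory I would bound $G(X(t))=\langle X(t),\kappa\rangle$ from above using the lower bound $c(u)\geq c^m(u)$ from Proposition \ref{pr:estimateck} together with the estimate (\ref{eq:estimateintc}), and check that $e^{-\rho T}v(X(T))$ has the correct sign in the limit (nonnegative if $\gamma\in(0,1)$, nonpositive if $\gamma>1$). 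Letting $T\to\infty$ then produces $v(q)\geq J_0(q;c(\cdot))$ for every admissible $c$, with equality for $c=\varphi(X_\varphi(\cdot))$, which gives $\varphi\in OFS_q$ and $V_0=v$ on $I$.

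The main obstacle will be the rigorous justification of the chain-rule identity, since admissible trajectories are only mild solutions of the infinite-dimensional ODE (\ref{eq:ss1}) and need not take values in $D(\mathcal{A})$, whereas our notion of classical solution requires (\ref{eq:HJB}) only on $\mathcal{X}\cap D(\mathcal{A})$. I would handle this by approximating $c(\cdot)$ by a sequence of smoother controls whose trajectories lie in $D(\mathcal{A})$, applying the identity there, and passing to the limit using continuity of $v$ and $G$ on $\mathcal{X}$ together with uniform integrability provided by the bounds above. The very special form $v(x)=\nu\langle x,\kappa\rangle^{1-\gamma}$ again simplifies matters, since the only genuinely infinite-dimensional object in the pointwise HJB computation is the scalar $\langle\mathcal{A}X(t),\kappa\rangle$, which remains manageable even when $X(t)\notin D(\mathcal{A})$.
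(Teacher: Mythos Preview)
Your verification-theorem strategy is exactly what the paper does: differentiate $e^{-\rho t}v(X(t))$ along trajectories, use the HJB equation to obtain the fundamental inequality, send $T\to\infty$, and read off optimality of $\varphi$. Two points differ from the paper, and one of them is a genuine gap.

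First, the approximation you propose to circumvent the $D(\mathcal{A})$ issue is unnecessary. The paper simply observes that, by the explicit structural-state formula (cf.\ (\ref{eq:Xphinew})), the trajectory satisfies $X(t)\in D(\mathcal{A})$ for $t>0$ whenever $q\in I$; the $W^{1,2}$ regularity built into the definition of $I$ is there precisely so that this holds. Hence the chain-rule computation is valid pointwise, without any smoothing of controls or passage to the limit.

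Second, your transversality argument is incomplete when $\gamma>1$. Knowing only that the boundary term $e^{-\rho T}v(X(T))$ is nonpositive does \emph{not} allow you to pass from
\[
v(q)\ge\int_0^T e^{-\rho t}\,\frac{(c(t)-\mathcal{D}X(t))^{1-\gamma}}{1-\gamma}\,dt+e^{-\rho T}v(X(T))
\]
to $v(q)\ge J_0(q;c(\cdot))$: one cannot drop a nonpositive term on the right of a $\ge$ inequality. What is needed, and what the paper establishes, is that $e^{-\rho T}v(X(T))\to 0$ for \emph{every} admissible trajectory; this follows from the bound $G(X(t))\le\kappa_0 X_0(t)$, the estimate $X_0(t)\le k^M(t)$ of Proposition~\ref{pr:estimateck}, and assumption (\ref{eq:Vfinite}).

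Finally, your uniqueness argument via strict concavity of the running reward and convexity of $\C_{ad}(q)$ is valid but not the paper's route: there one rewrites the conclusion as the identity (\ref{eq:fundid}), observes that any optimal control must make the integrand vanish a.e.\ and hence satisfy the closed-loop relation $c^1(t)=\varphi(X^1(t))$, and then invokes the uniqueness part of Lemma~\ref{lm:CLE}.
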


\begin{proof}
Let us consider the solution of the HJB equation $v\left( x\right) =\nu
\left( G\left( x\right) \right) ^{1-\gamma }$ and the function
\begin{eqnarray*}
\widetilde{v}\left( t,x\right) &:&
\mathbb{R}
\times M^{2}\rightarrow
\mathbb{R}
\\
\widetilde{v}\left( t,x\right) &=&e^{-\rho t}v\left( x\right)
\end{eqnarray*}
Now take $q \in I $, take any admissible control $c(\cdot)\in \C_{ad}(q)$ and
call $X(\cdot)$ the associated state trajectory starting at $q$. Then we have
\begin{eqnarray*}
\frac{d\widetilde{v}\left( t,X\left( t\right) \right) }{dt}
&=&-\rho e^{-\rho t}v\left( X\left( t\right)\right)
+e^{-\rho t}<D{v}\left(X\left( t\right) \right) ,\A X\left( t\right)  +\B c(t) > \\
\end{eqnarray*}
Observe that the above make sense since, by construction (see e.g. (\ref{eq:Xphinew}))
it must be $X(t) \in D(\A)$ when $q \in I$.

Integrating on $\left[ 0,\tau \right] $ yields to%
$$
e^{-\rho \tau}v\left(X\left( \tau \right) \right)
-v\left(X\left( 0\right) \right) =
$$
\begin{equation}
=\int_{0}^{\tau
}e^{-\rho t}\left[
-\rho v\left( X\left( t\right) \right)
+<Dv\left(X\left( t\right) \right),
\A X\left( t\right) >
+<\B^*Dv\left(X\left( t\right) \right), c\left( t\right) >
\right] dt  \label{eq:integrationpartieV}
\end{equation}
Now
$$G\left( X\left( t\right) \right) =\left( 1-\varepsilon \int_{-\tau
}^{0}e^{\left( A-\delta +\eta \right) s}ds\right) X_0\left( t\right)
- \int_{-\tau }^{0}e^{\left( A-\delta \right) s}
X_1(t)[s]ds.$$
Since, as noted before Proposition \ref{prop:HJBv}, we have
$ 1-\varepsilon \int_{-\tau }^{0}e^{\left( A-\delta +\eta \right)
s}ds>0,$
then it must be $G\left( X\left( t\right) \right) \leq \left(
1-\varepsilon \int_{-\tau }^{0}e^{\left( A-\delta +\eta \right) s}ds\right)
X_0\left( t\right)$, thus
\[
e^{-\rho \tau }G\left( X\left( t\right) \right) ^{1-\gamma }\leq \left(
1-\varepsilon \int_{-\tau }^{0}e^{\left( A-\delta +\eta \right) s}ds\right)
^{1-\gamma }e^{-\left( \rho -\left( 1-\gamma \right) \left( A-\delta \right)
\right) \tau }\left( \frac{X_0\left( t\right) }{e^{\left( A-\delta \right)
t}}\right) ^{1-\gamma }
\]
According to Proposition \ref{pr:estimateck} (since clearly
$X_0(t)\le k^M(t)$) we thus have that
$$\lim_{\tau
\rightarrow \infty }e^{-\rho\tau} v\left(X\left( \tau
\right) \right) =0.$$
Hence, using that $q=X\left( 0\right) $ and taking
the limit as $\tau $ tends to infinite in (\ref{eq:integrationpartieV}),
we obtain
\[
-v\left( q\right) =
\]
\begin{equation}
=\int_{0}^{+\infty}e^{-\rho t}\left[
-\rho v\left( X\left( t\right) \right)
+<Dv\left(X\left( t\right) \right),
\A X\left( t\right) >
+<\B^*Dv\left( X\left( t\right) \right)
, c\left( t\right) >
\right] dt
 \label{eq:integrationpartieVbis}
\end{equation}
so using the definition (\ref{eq:HCV}) of current value Hamiltonian
\begin{eqnarray*}
v\left( q\right) -J_0\left( q;c(\cdot)\right)
=\int_{0}^{\infty }e^{-\rho t}\left( \rho v\left( X\left(
t\right) \right) -H_{CV}\left( X\left( t\right),
Dv\left( X\left(t\right) \right) ,c(t)\right) \right) dt
\end{eqnarray*}
As the value function solves $\rho v\left( x\right) -\mathcal{H}\left(
x,Dv\left( x\right) \right) =0,$ the above implies that
\begin{equation}\label{eq:fundid}
v\left( q\right) -J_0\left( q;c(\cdot)\right)
=\int_{0}^{\infty }e^{-\rho t}
\left[\mathcal{H}\left( X\left( t\right),Dv\left( X\left(t\right) \right) \right)
-H_{CV}\left(X\left( t\right) ,
Dv\left( X\left(t\right) \right) ,c(t)\right)\right] dt
\end{equation}
According to the definition of $\mathcal{H}$, for every admissible control
the integrand of the above right hand side is always positive.
This implies, according to the definition of $V_{0}$, that
\[
v\left( q\right) \geq V_{0}\left( q\right)
\]
and this must be true for every $q \in I$.
Moreover, choosing $c(t)=\varphi (X_\varphi (t))$ (which is admissible thanks to Proposition \ref{pr:Iadmissible}) clearly the right
hand side becomes zero and so such control strategy is optimal. This implies that
$v\left( q\right) = V_{0}\left( q\right)$ for every $q \in I$.

Finally, if $c^1(\cdot)$ is another optimal strategy
(with associated state trajectory $X^1(\cdot)$)
it must satisfy (\ref{eq:fundid}) (where now $v=V_0$ since they are equal on $I$).
So it must be necessarily, for a.e $t\ge 0$,
$$
\mathcal{H}\left( X^1\left( t\right),Dv\left( X^1\left(t\right) \right) \right)
-H_{CV}\left(X^1\left( t\right) ,
Dv\left( X^1\left(t\right) \right) ,c^1(t)\right)=0
$$
which implies that, $t$ a.e., $c^1(t)=\varphi (X^1(t))$.
By the uniqueness of the solutions of the closed loop equation
(\ref{eq:CLEphi}) proved in Lemma \ref{lm:CLE}, we then get
that, $t$ a.e., $c_1(t)= c(t)$.
\end{proof}

\bigskip

\begin{remark}\label{rm:technical} To get the explicit solution $v$ of the HJB equation and to find the optimal closed loop policy $\varphi$, we cannot apply directly the approach used in \cite{FaGo} or in \cite{bambi2010} due to the presence of the delay in the constraint and in the objective functional instead than in the state equation. This changes the structure of the problem. In particular, differently from the case treated in the previous papers, the gradient $Dv$ of the solution of the HJB equation does not belong to $D(A^*)$ and so the concept of solution of such equation must be changed (compare Definition \ref{df:solHJB} with the analogous one of such papers). This fact induces a change in the arguments of the main proofs: the fact that $v$ solves the HJB equation and the fact that the feedback strategy $\varphi$ is admissible and optimal. \end{remark}

It is worth noting that the optimality of $\varphi$ depends on the
initial datum $q$ to belong to the set $I$; this implicitly implies a restriction on the initial value of capital which we may choose. This restriction will be made explicit in the next Section after Proposition \ref{pr:equivalence} and its economic meaning will be also explained.

\section{Equivalence\label{sec:equivalence}}

%
%

The result of the previous section gives the optimal feedback map in the infinite dimensional setting. We now use this result to write the closed loop policy formula in
the delay differential equation setting and we use it to prove the equivalence
between the cases of internal and external habits.


\begin{proposition}\label{pr:equivalence}
Given any initial datum $(k_0, c_0(\cdot))$ the problem (P) above has a unique optimal state-control couple $(k^*(\cdot),c^*(\cdot))$.
Such couple is the only one that satisfies the closed-loop formula:
\[
\frac{c\left( t\right) -h\left( t\right) }{ A-\delta -\Gamma  }
=\]
\[=\left( 1-\varepsilon \int_{-\tau }^{0}e^{\left( A-\delta +\eta \right)
s}ds\right) k\left( t\right) -\left[ \frac{h\left( t\right) }{A-\delta +\eta
}-\varepsilon e^{-\left( A-\delta +\eta \right) \tau }\int_{t-\tau }^{t}%
\frac{e^{\left( A-\delta \right) \left( t-s\right) }}{A-\delta +\eta }%
\tilde c\left( s\right) \text{ }ds\right]
\]
where $h(t)$ is given by (\ref{HABITtilde}).
\end{proposition}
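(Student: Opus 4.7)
The plan is to unwind the infinite-dimensional feedback $\varphi$ obtained in Section~\ref{sec:HJB} into concrete quantities $k(t)$, $h(t)$ and $\widetilde{c}(\cdot)$, and then to identify the resulting expression with the one in the statement.

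First I would recall that, by Proposition~\ref{pr:Ioptimal}, the unique optimal state-control pair is characterized by $c^*(t)=\varphi(X^*(t))=X^*_1(t)[0]+\alpha\,G(X^*(t))$, with $\alpha=(\rho-(A-\delta)(1-\gamma))/\gamma$. A trivial algebraic check gives $A-\delta-\Gamma=\alpha$. Next I would translate the two components of the structural state: a change of variable $r=t+u-s$ in the definition of $X_1(t)[s]$ gives
\[
X^*_1(t)[s]=\varepsilon\,e^{\eta s}\int_{t-\tau-s}^{t}\widetilde{c}^{\,*}(r)\,e^{\eta(r-t)}\,dr ,
\]
and in particular $X^*_1(t)[0]=h(t)$. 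After these identifications, the content of the proposition reduces to the identity
\[
G(X^*(t))=\Bigl(1-\varepsilon\!\int_{-\tau}^{0}\!e^{(A-\delta+\eta)s}ds\Bigr)k(t)-\frac{h(t)}{A-\delta+\eta}+\frac{\varepsilon\,e^{-(A-\delta+\eta)\tau}}{A-\delta+\eta}\int_{t-\tau}^{t}\!e^{(A-\delta)(t-s)}\widetilde{c}^{\,*}(s)\,ds.
\]

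The central computation, which I expect to be the main (though only technical) obstacle, is the evaluation of $\int_{-\tau}^{0}e^{(A-\delta)s}X^*_1(t)[s]\,ds$. I would apply Fubini to the double integral: the integration domain $\{(s,r):-\tau\le s\le 0,\ t-\tau-s\le r\le t\}$ rewrites, for fixed $r\in[t-\tau,t]$, as $s\in[t-\tau-r,0]$. Performing the inner integration in $s$ yields a factor $(1-e^{(A-\delta+\eta)(t-\tau-r)})/(A-\delta+\eta)$, and one then observes the clean cancellation $e^{\eta(r-t)}e^{(A-\delta+\eta)(t-r)}=e^{(A-\delta)(t-r)}$. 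Separating the two terms produces exactly $h(t)/(A-\delta+\eta)$ and the remaining weighted integral of $\widetilde{c}^{\,*}$, matching the right-hand side above.

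Finally, substituting back into $c^*(t)-h(t)=\alpha\,G(X^*(t))$ and dividing by $\alpha=A-\delta-\Gamma$ gives the stated closed-loop formula. Existence of the optimal couple, its uniqueness, and the fact that it is the only trajectory satisfying the closed-loop formula all follow from Lemma~\ref{lm:CLE} (uniqueness of solutions of the feedback equation) together with Proposition~\ref{pr:Ioptimal}; any other admissible state-control couple solving the closed-loop formula would in particular solve (\ref{eq:CLEphi}) and therefore coincide with $(k^*(\cdot),c^*(\cdot))$ almost everywhere.
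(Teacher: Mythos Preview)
Your proposal is correct and follows essentially the same route as the paper: start from the optimal feedback $c^*(t)-h(t)=\alpha\,G(X^*(t))$, identify $X^*_0(t)=k(t)$ and $X^*_1(t)[0]=h(t)$, and then reduce $\int_{-\tau}^{0}e^{(A-\delta)s}X^*_1(t)[s]\,ds$ to the two displayed terms. The only cosmetic difference is that the paper evaluates this double integral by integration by parts whereas you use Fubini; both lead to the same expression, and the uniqueness argument you invoke (Lemma~\ref{lm:CLE} plus Proposition~\ref{pr:Ioptimal}) is exactly what the paper relies on.
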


\begin{proof}
We have, by the definition of the optimal feedback map $\varphi$, that, on the optimal path,
$$c\left( t\right) -h\left( t\right) = \alpha G\left( X(t)\right)
$$
$$
=\left( A-\delta -\Gamma \right) \left[
\left( 1-\varepsilon \int_{-\tau }^{0}e^{\left( A-\delta +\eta \right)
s}ds\right)X_0(t) -\int_{-\tau }^{0}e^{\left( A-\delta \right)
s}X_1(t)[s]ds \right]
$$
Now we know that $X_0(t)=k(t)$ while $X_1(t)[s]=\eps\int_{-\tau}^{s}
\tilde c(t+u-s) e^{\eta u}du$ so, substituting, we have,
\[
\frac{c\left( t\right) -h\left( t\right) }{A-\delta -\Gamma }
=
\left( 1-\varepsilon \int_{-\tau }^{0}e^{\left( A-\delta +\eta \right)
s}ds\right) k\left( t\right) - \eps
\int_{-\tau }^{0}e^{\left( A-\delta \right)
s}\int_{-\tau}^{s}
\tilde c(t+u-s) e^{\eta u}du ds
\]
Now we integrate by parts obtaining, with straightforward computations,
\begin{eqnarray*}
\int_{-\tau }^{0}e^{\left( A-\delta \right)
s}\int_{-\tau}^{s}
\tilde c(t+u-s) e^{\eta u}du ds
\end{eqnarray*}
$$
=\frac{1}{A-\delta +\eta }\int_{-\tau }^{0}c\left( t+v\right) e^{\eta
v}dv-e^{-\left( A-\delta +\eta \right) \tau }\int_{t-\tau }^{t}\frac{%
e^{\left( A-\delta \right) \left( t-s\right) }}{A-\delta +\eta }c\left(
s\right) \text{ }ds
$$
which gives the claim.
\end{proof}

\bigskip

Using the above result it is not difficult to prove, by straightforward computations, that
given any initial data $\left( k_{0},c_{0}\left( \cdot\right) \right) ,$ there
exists a $\Lambda $ such that, along an optimal trajectory, the optimal
control $c\left( \cdot\right)^* $ satisfies
\[
c(t)-h(t)=\Lambda e^{\Gamma t}
\]%
with
$$
\Lambda =\left( A-\delta -\Gamma \right)\cdot
$$
\begin{equation}\label{eq:Lambda}
\cdot \left( \left( 1-\varepsilon
\int_{-\tau }^{0}e^{\left( A-\delta +\eta \right) s}ds\right) k\left(
0\right) -\left[ \frac{h\left( 0\right) }{A-\delta +\eta }-\varepsilon
e^{-\left( A-\delta +\eta \right) \tau }\int_{-\tau }^{0}\frac{e^{-\left(
A-\delta \right) s}}{A-\delta +\eta }c\left( s\right) \text{ }ds\right]
\right).
\end{equation}
It is worth noting that the constraint, $c(t)\geq h(t)$, is respected if $\Lambda>0$ or equivalently, in term of the initial capital stock, if
\[k(0)\geq \frac{h(0)}{A-\delta+\eta-\varepsilon+\varepsilon e^{-(A-\delta+\eta)\tau}}-\frac{\varepsilon e^{-\left( A-\delta +\eta \right) \tau }}{A-\delta+\eta-\varepsilon+\varepsilon e^{-(A-\delta+\eta)\tau}}\int_{-\tau
}^{0}e^{-\left( A-\delta \right) u}c\left( u\right) du.\]
In the specific case, $\tau=\infty$ and $\eps=\eta$ this condition becomes $rk(0)>h(0)$ meaning that capital income (which in our context coincides with the initial wealth) has to be higher than the initial habits otherwise an initial consumption higher than $h(0)$ will pin down a consumption path not sustainable over time since financed with the resources coming from disinvestments.

In the case with a finite $\tau$ this condition becomes less restrictive as the first term in the right hand side of the inequality becomes smaller and the second negative term appears. The reason is that the stock of habits is now formed over a finite consumption history and therefore less resources are needed at the beginning because the past consumption affecting the habit formation will be completely ``depreciated'' after a period of length $\tau$.

At this point we have all the information for proving the main result of the paper.

\begin{theorem}[Equivalence Theorem]
Consider an economy with subtractive nonseparable C.E.S. utility function and linear technology. Then internal and external habits lead to the same unique equilibrium path. This path is Pareto optimal.
\end{theorem}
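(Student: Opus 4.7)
The plan is to compare directly the closed-loop policy formula we derived for the internal-habit problem (Proposition \ref{pr:equivalence}) with the closed-loop policy formula for the external-habit model obtained in \cite{auba} via a modified Pontryagin Maximum Principle together with the market-equilibrium condition $\bar c(t)=c(t)$. If the two formulas coincide as functional identities in $(k(t),h(t),\tilde c|_{[t-\tau,t]})$, then the pair $(k(\cdot),c(\cdot))$ satisfies the same delay system in both economies; uniqueness of the solution of that system given the initial data $(k_0,c_0(\cdot))$ then delivers coincidence of the two equilibrium paths.

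The first step is therefore a term-by-term comparison of the two affine expressions: one matches the coefficient $A-\delta-\Gamma$ multiplying $c(t)-h(t)$, the coefficient $1-\varepsilon\int_{-\tau}^{0} e^{(A-\delta+\eta)s}\,ds$ multiplying $k(t)$, the term $h(t)/(A-\delta+\eta)$, and the weighted past-consumption integral $\varepsilon e^{-(A-\delta+\eta)\tau}\int_{t-\tau}^{t} e^{(A-\delta)(t-s)}\tilde c(s)\,ds/(A-\delta+\eta)$ that appear in Proposition \ref{pr:equivalence} against the analogous coefficients reported in \cite{auba}. Once this algebraic identification is completed, uniqueness of the common trajectory follows from Proposition \ref{pr:equivalence} on the internal side and from the corresponding uniqueness statement in \cite{auba} on the external side; moreover, expression (\ref{eq:Lambda}) shows that the shared path is entirely pinned down by $(k_0,c_0(\cdot))$ through the single scalar $\Lambda$.

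Pareto optimality is then immediate. By construction, the internal-habit problem is the social-planner problem in which the habit externality is fully internalized, and by Proposition \ref{pr:Ioptimal} its unique optimum is precisely the closed-loop trajectory identified above. Since the external-habit equilibrium follows the same path, it is also Pareto optimal, which is the familiar statement that, under the subtractive specification (\ref{subU}), the consumption externality is non-distortionary.

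The main obstacle is computational rather than conceptual: the explicit verification that the two closed-loop formulas agree is not obvious a priori. The planner's first-order condition contains an extra discounted integral term capturing the effect of today's consumption on tomorrow's habit that is absent in the individual agent's first-order condition once $h$ is taken as exogenous. The structural reason for the cancellation is the subtractive form of (\ref{UU}), which implies $u_h=-u_c$, combined with the exponential weighting $e^{\eta(u-t)}$ of the habit kernel; together these force the discounted cross-term to be absorbed into the very same affine coefficients of $k(t)$ and of the past-consumption integral that arise on the external side, so that the matching reduces to identities involving the integrals $\int_{-\tau}^{0} e^{(A-\delta+\eta)s}\,ds$ already manipulated in Proposition \ref{prop:HJBv} and in the proof of Proposition \ref{pr:equivalence}.
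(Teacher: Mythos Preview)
Your proposal is correct and follows essentially the same route as the paper: the proof there simply quotes the external-habit closed-loop formula from \cite{auba} (your equation (\ref{eq: extpolicy})), observes that it coincides term-by-term with the internal-habit formula of Proposition \ref{pr:equivalence}, and notes that the costate initial value in \cite{auba} equals $\Lambda$ in (\ref{eq:Lambda}). Your additional discussion of uniqueness, Pareto optimality via Proposition \ref{pr:Ioptimal}, and the structural reason ($u_h=-u_c$) for the cancellation is more detailed than what the paper provides, but the underlying argument is the same.
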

\begin{proof}
  The closed loop policy formula for the external case was found in Augeraud-Veron and Bambi \cite{auba} (see proof of Proposition 4) using a modified version of the Pontryagin Maximum Principle. Such function writes:
$$
\frac{c(t)-h(t)}{A-\delta -\Gamma }=k(t)-\frac{1}{A-\delta +\eta }\left[ h\left(
t\right) +\varepsilon \left( 1-e^{-\left( A-\delta +\eta \right) \tau
}\right) k\left( t\right)\right.
$$
\begin{equation}
\left. -\varepsilon e^{-\eta \tau }e^{-\left( A-\delta
\right) \tau }\int_{t-\tau }^{t}e^{-\left( A-\delta \right) \left(
u-t\right) }c\left( u\right) du\right]   \label{eq: extpolicy}
\end{equation}
Therefore the equivalence between external and internal habits emerges immediately comparing it with the result of Proposition \ref{pr:equivalence}. It is worth noting that
the initial value of the costate variable found in \cite{auba} is exactly equal to the constant $\Lambda$ given in (\ref{eq:Lambda}), as expected.
\end{proof}

\bigskip

Therefore the market equilibrium path in the case of external habits has been proved to be Pareto optimal since it coincides with the optimal path derived by solving the problem with internal habits. Interestingly enough this result is robust to any selections of the key parameters in the economy and even more importantly to any specification of the parameter $\tau$ capturing the consumption history relevant in the formation of the habits.

\section{Concluding remarks}

In this paper, we have shown that internal and external habits may lead to the same closed loop policy function and then to the same Pareto optimal equilibrium path. Interestingly enough this anomaly emerges when the utility function is subtractive nonseparable but not for the multiplicative nonseparable case as emerged from Carroll et al. \cite{CarWe1}. Therefore models with habits formation and subtractive utility function needs a higher degree of heterogeneity, as for example different initial endowments across the households, to guarantee a clear distinction between the external and internal specification. Alternatively, the multiplicative nonseparable formulation should be preferred and used.

\end{document}